\documentclass[11pt]{article}
\textheight = 9.0in
\textwidth = 6.5in
\topmargin = -0.5in
\oddsidemargin = 0.0in
\linespread{1.5}

\usepackage[utf8]{inputenc}
\usepackage{amssymb}
\usepackage{amsmath}
\usepackage{amsfonts}
\usepackage{mathrsfs}
\usepackage{verbatim}
\usepackage{mathtools}
\usepackage{tensor}
\usepackage{amsthm}
\usepackage{enumitem}
\usepackage{hyperref}

\usepackage{authblk}
\usepackage[sort,comma]{natbib}

\usepackage{caption}
\usepackage{booktabs}
\usepackage{siunitx}
\usepackage[T1]{fontenc}
\usepackage{float}

\usepackage{thmtools}
\usepackage{thm-restate}
\usepackage{hyperref}
\usepackage{cleveref}
\usepackage{changepage}

\newcommand{\R}{\mathbb{R}}

\newcommand{\ep}{\varepsilon}
\newcommand{\form}[1]{#1}

\newcommand{\kep}{\frac{\ep-1}{\ep}}
\newcommand{\negkep}{\frac{1-\ep}{\ep}}
\newcommand{\genbeta}[1]{\beta_{#1}(\textbf{q})}
\newcommand{\gengamma}[1]{\gamma_{#1}(\textbf{q})}

\newcommand{\genmu}{\mu(\textbf{q})}
\newcommand{\genv}[1]{v_{#1}(t; \textbf{q})}
\newcommand{\qvec}{\textbf{q}}
\newcommand{\genvprime}[1]{v'_{#1}(t; \qvec)}

\newcommand{\lsup}[1]{\limsup\limits_{{#1}\rightarrow\infty}}
\newcommand{\linf}[1]{\liminf\limits_{{#1}\rightarrow\infty}}
\newcommand{\result}{\mu(\qvec)^\negkep}

\newcommand{\optp}[1]{p_{#1}^*(t;\qvec)}
\newcommand{\single}[1]{\delta_{#1}}

\newtheorem{theorem}{Theorem}
\newtheorem{corollary}[theorem]{Corollary}

\title{Dynamic Pricing with Variable Order Sizes for a Model with Constant Demand Elasticity\footnote{© 2018. This manuscript version is made available under the CC-BY-NC-ND 4.0 license \url{http://creativecommons.org/licenses/by-nc-nd/4.0/}}}
\author{
  Nyles Breecher\footnote{Corresponding author, breecher@uwm.edu, University of Wisconsin Milwaukee, Dept of Mathematical Sciences, PO Box 413, Milwaukee, WI 53201, USA}, Richard Stockbridge\footnote{stockbri@uwm.edu, University of Wisconsin Milwaukee, Dept of Mathematical Sciences, PO Box 413, Milwaukee, WI 53201, USA} \\
  \textit{University of Wisconsin Milwaukee}
}
\date{\today}

\begin{document}

\maketitle

\subsection*{Keywords}

Dynamic Pricing, Constant Demand Elasticity, Optimal Stochastic Control, Variable Order Sizes

\subsection*{Abstract}

In this paper we investigate a dynamic pricing model for constant demand elasticity where customers have a probability distribution on the number of items they order. This is a generalization from standard models which restrict customers to buy only one item at a time. For the generalized model, we first obtain a closed form expression for the optimal expected revenue and optimal pricing strategy. This expression involves a recursively defined term for which we investigate the behavior. We call comparable models those which have the same demand, which is the customer arrival rate times the average order size. In fact, the average order size plays an important role for results for the generalized model. An important result we show is that comparable models have the same asymptotic pricing behavior. Numerical results also show that comparable models are relatively close even for low inventory levels. Lastly, we prove that the relative difference between comparable models is governed not by the customer arrival rate, but solely by their order size distributions.

\section{Introduction}

\subsection{Overview}

Dynamic pricing concerns sellers who attempt to maximize their profits by choosing a pricing strategy based on market conditions. We will consider the problem of finding an optimal pricing strategy based on a limited inventory and limited time to sell that inventory. The most common example of this type of problem occurs with airline seat pricing: the price of seats can change dramatically up until the flight, but seats cannot be sold after that time. Other examples include fashion goods and hotel rooms. Standard models allow customers to purchase one item at a time; our generalization to this model is to allow customers to order multiple items at once. In particular, we look at a model where the demand elasticity is constant. Throughout this paper we will refer to the "standard model" as a model where customers order only a single item, and the "generalized model" to refer to our model where there is a probability distribution on the amount of items customers order.

The model we use is a random time change of a compound Poisson process with rate $\lambda(p,t)$ indicating the arrival rate of customers willing to pay $p$ at time $t$. The random time change portion comes from the fact that $\lambda$ depends on $p$, which varies based on any particular realization of the model. Here we make a clarification on the term "demand." Demand refers to the amount of items desired per unit of time. When customers order one item, this term is synonymous with the customer arrival rate; however, when customers have a probability distribution on the amount of items they order, demand is equal to $\lambda(p,t)\mu$, where $\mu$ is the average order size. There are many dynamic pricing models which investigate various forms for the customer arrival function $\lambda$. We will consider the specific case where demand elasticity is constant. Such demand commonly appears in economics, and provides a tractable example for variable order sizes. In this case, $\lambda(p,t)$ takes form $\lambda(p,t)=a(t)p^{-\ep}$ where $a(t)$ is a scaling factor for demand over time and $\ep$ is the demand elasticity.

 There are two important questions when generalizing to variable order sizes that we wish to address. First, how do the results from the standard model generalize? We observe that the average order size $\mu$ plays an important result in these generalized results, a term which is not observed for the standard model since $\mu=1$. The second question is how does the standard model compare to the generalized model? We will refer to "comparable models" as those which have the same demand.

The first result we obtain is a closed form expression for the optimal expected revenue and the optimal pricing strategy for the variable order size model. That said, the expression involves a recursively defined term which requires further examination. Analyzing this term proves to be more difficult than it might appear at first glance. In particular, it requires different proof techniques compared to those for the standard model. After analysis, we prove that a variable order size model and its comparable standard model are asymptotically equivalent as the inventory approaches infinity. Additionally, we provide some numerical examples which show that these comparable models are similar even for small inventory values. As a last--and surprising--result, we find that the relative difference between comparable models are unaffected by the customer arrival rate $\lambda(p,t)$, but can be determined by their order size distributions.

\subsection{Literature Review}

In this section we discuss papers which are most directly related to our work.

General theory for dynamic pricing models in continuous time has been well elucidated in \cite{Gallego}. This paper focuses on how the two main properties of dynamic pricing models, a limited stock and time to sell it, influence the model. It explores the general theory around finding optimal revenue and pricing strategies and finds a closed form expression for the optimal expected revenue and pricing strategy for an exponential customer arrival rate function $\lambda=e^p$. Some generalizations to the basic problem are also included, such as a brief description of how to work with a compound Poisson process, but no specific $\lambda$ functions are examined with this lens.

A wider overview of dynamic pricing models is provided by \cite{Talluri}. Of particular note is the comparison between dynamic pricing and inventory control.

Later, the article by \cite{Mcafee} applies the work by Gaellgo and van Ryzin to the case of constant demand elasticity $\ep$, a situation which requires the specific demand function $\lambda=p^{-\ep}$. They consider constant demand elasticity since it has practical applications to economics. Much of our work for the variable order size model in this paper draws from the work present in this paper; however, generalizing for variable order sizes requires different proof techniques. Our initial results parallel theirs, but with the introduction of the average order size $\mu$ in several places.

The paper by \cite{Monahan} also considers a dynamic pricing model with constant demand elasticity; however, its model evolves in discrete time. Under this setup customers having variable order sizes not not arise in the model, as all that matters is the average amount of sales during each time period.

The paper by \cite{Helmes} considers a different extension to the dynamic pricing problem with constant demand elasticity by including dynamic advertising.

Other authors have explored models where customers specify the amount of items they wish to order. For example, \cite{Sawaki} uses a semi-Markov decision process for modeling.

\subsection{Dynamic Programming Formulation}

In this section we define the notation and describe the assumptions we use for our modeling.

Our model is a random time change of a compound Poisson process with customer arrival rate $\lambda(p,t)$ dependent on price $p$ and current time $0\leq t\leq T$, where $T$ is the maximum sale time. Each customer who arrives has a probability distribution $\qvec$ on the amount of items they order, and can order a maximum of $M$ items. A finite maximum order size helps computation, but matters from a practical sense: one cannot sell more than their maximum possible inventory. Write $\qvec=(q_1,q_2,\ldots,q_M)$ to indicate customers buy $i$ items with probability $q_i$ (thus $\sum_{i=1}^M q_i=1$). Define the average order size $\genmu=\sum_{i=1}^M q_i$. Thus demand is given by $q(p,t)=\lambda(p,t)\genmu$, not $\lambda(p,t)$, a distinction which is necessary when working with variable order sizes.

A demand function is said to have constant elasticity $\ep$ if
\begin{equation}
\ep=-\frac{p\left(\frac{dq}{dp}\right)}{q(p)}=-\frac{p\left(\frac{d\lambda}{dp}\right)}{\lambda(p,t)},\label{eq_ep}
\end{equation}
which has solution $\lambda(p,t)=a(t)p^{-\ep}$ for some time-dependent function $a(t)$. $a(t)$ can be thought of as the customer arrival rate for $p=1$. Note that by (\ref{eq_ep}), comparable models have equal demand elasticities $\ep$.

Consider now the problem of a company trying to maximize its revenue rate $r$. In light of constant elasticity $\ep$ and marginal cost $c>0$, the maximum revenue rate is given by
$$r^*=\max_p (p-c)q(p)=\max_p (p-c)a(t)p^{-\ep}\genmu.$$
This has no practical solution if $\ep\leq 1$, as the company would need to sell at an infinitely high price. Thus we assume $\ep>1$.

To continue developing the model, we follow the method as outlined by Gallego and van Ryzin (1994). Let $\genv{n}$ be the optimal expected revenue for a seller with $n$ items to sell at time $0\leq t\leq T$. Thus we have the natural constraints
$$\genv{0}=0 \quad \text{and} \quad v_n(T; \genmu)=0,$$
which show that no revenue can be made if there is no inventory to sell or there is no time left to sell.

We will heuristically derive the Hamilton-Jacobi equations for $v'(t; \qvec)$ following \cite{Gallego}. These equations will give us a condition in order to solve for $v(n;\qvec)$. Consider the optimal expected revenue for $n>0$ over a time span of $\delta t$. During this time span a customer arrives with probability $\lambda(p,t)\delta t$, and they order $i$ items with probability $q_i$. Therefore, using the Principle of Optimality, we can say
\begin{equation}
    v_n(t; \qvec)=\sup\limits_p \left[\underbrace{(1-\lambda(p, t)\delta t)v_n(t+\delta t; \qvec)}_{\text{From selling no items}}+\sum_{i=1}^M \underbrace{q_i\lambda(p, t)\delta t( ip+v_{n-i}(t+\delta t; \qvec))}_{\text{From selling $i$ items for $p$ each}}\right]. \label{eq_v}
\end{equation} 

In order to use the above equation, we need to define some base cases. The summation requires $M$ terms, meaning $\genv{n}$ needs to be defined for for $n\leq 0$, but what does a ``negative inventory'' mean? Depending on how these values are defined, multiple interpretations can arise. If $\genv{n}=0$ for $n\leq 0$, this equates to overselling. This might be reasonable, such as in the case of airline flights, which often overbook. If $\genv{n}=pn$ for $n\leq 0$, this negates any profit earned from overselling, which equates to customers buying only the available inventory if they would otherwise buy more items than are available. Other values of $\genv{n}$ for $n\leq 0$ can be chosen to appropriately model behavior as inventory becomes small. For simplicity, we will allow the case of overselling, i.e. $\genv{n}=0$ for $n\leq 0$, although our results still hold for different definitions of the base terms. Additionally, it's worth noting that the overall optimal expected revenue will not be affected much if the maximum inventory is sufficiently large.

From Equation (\ref{eq_v}) we finish development of the the Hamilton-Jacobi equations. To find $v_n'(t; \qvec)$. Rearranging (\ref{eq_v}) we get
\begin{equation*}
\begin{split}\frac{v_n(t+\delta t; \qvec) -v_n(t; \qvec)}{\delta t} = & - \sup\limits_p \lambda(p, t)\bigg[-v_n(t+\delta t; \qvec) +\sum_{i=1}^M q_i( ip+v_{n-i}(t+\delta t; \qvec))\bigg],
\end{split}
\end{equation*}
now take the limit as $\delta t\rightarrow 0$. Note that the interchange of the supremum and limit has not been justified. For full details, see \cite{Bremaud}. This yields
$$v_n'(t; \qvec)=-\sup\limits_p \lambda(p, t)\left[- v_n(t; \qvec)+\sum_{i=1}^M q_i(ip+v_{n-i}(t; \qvec))\right].$$
Then substitute $\lambda(p,t)=a(t)p^{-\ep}$ and rewrite,
\begin{align}
    v_n'(t; \qvec) & = -\sup\limits_p a(t)p^{-\ep}\left[-v_n(t; \qvec)+\sum_{i=1}^M q_i ( ip+v_{n-i}(t; \qvec))\right] \nonumber \\
    & = -\sup\limits_p a(t)p^{-\ep}\left[-v_n(t; \qvec)+\sum_{i=1}^M q_i i p +\sum_{i=1}^M q_i v_{n-i}(t; \qvec)\right] \nonumber \\
    & = -\sup\limits_p a(t)p^{-\ep}\left[\genmu p - \left(v_n(t; \qvec) - \sum_{i=1}^M q_i v_{n-i}(t; \qvec)\right)\right] \nonumber \\
    & =: -\sup\limits_p a(t)p^{-\ep}\left[\genmu p - C\right]. \label{eq_v_bellman}
\end{align}
Find the supremum by setting the $p$-derivative of the right-hand side to 0,
$$0=a(t)\genmu(-\ep+1)p^{-\ep}+a(t)C\ep p^{-\ep-1}=a(t)p^{-\ep-1}(\genmu(1-\ep)p+\ep C),$$
so the price $p^*$ which obtains the maximum is
\begin{equation}
    \form{p}^*=\frac{\ep}{\ep-1}\genmu^{-1}C=\frac{\ep}{\ep-1}\genmu^{-1}\left(v_n(t; \qvec) - \sum_{i=1}^M q_i v_{n-i}(t; \qvec)\right).\label{eq_maximal_p}
\end{equation}
Substituting this value for $p$ into Equation (\ref{eq_v_bellman}) yields
\begin{align}
    \genvprime{n} & = -a(t)\left(\frac{\ep}{\ep-1}\genmu^{-1}C\right)^{-\ep}\left(\frac{\ep}{\ep-1}C-C\right) \nonumber \\
    & = -a(t)\genmu^{\ep}\left(\frac{\ep-1}{\ep}\right)^{\ep}C^{1-\ep}\left(\frac{\ep}{\ep-1}-1\right) \nonumber \\
    & = -a(t)\genmu^{\ep}\frac{(\ep-1)^{\ep-1}}{\ep^\ep}C^{1-\ep} \nonumber \\
    & = -a(t)\genmu^{\ep}\frac{(\ep-1)^{\ep-1}}{\ep^\ep}\left(v_n(t; \qvec) - \sum_{i=1}^M q_i v_{n-i}(t; \qvec)\right)^{1-\ep}. \label{vn-prime}
\end{align}
Thus we have a formula for $\genvprime{n}$, which by the Principle of Optimality is a necessary and sufficient condition for $\genv{n}$.

\section{Results}

\subsection{Overview}

The first result presents a closed form for the optimal expected revenue and optimal pricing strategy. However, it involves an recursively computable term which has a closed form expression, but is difficult to understand from the formula alone. The results after the first delve into the properties of this term, culminating in a theorem which says that comparable models have the same inventory-asymptotic behavior. (Recall that we define two models as comparable if their demands $\lambda \mu$ are equal.) Or in other words, a model allowing variable order sizes may be approximated with a model allowing a single order size.

As a stepping stone for our formulas, we need to define the non-negative sequence $(\beta_n(\textbf{q}))_n$ for $n\leq 0$ by $\genbeta{n}=0$ and for $n>0$ so that $\genbeta{n}$ is satisfies the recursive equation:
\begin{equation}
    \kep=\genbeta{n}^{\frac{1}{\ep-1}}\left(\genbeta{n}-\sum_{i=1}^Mq_i \genbeta{n-i}\right).
    \label{eq_genbeta}
\end{equation}
As one can see, this sequence captures some of the structure in (\ref{vn-prime}). Despite a cumbersome equation, each $\genbeta{n}$ can be computed numerically. Another term which is needed is $A(t)=\int_t^Ta(s)ds$, and can be thought of as the expected number of future sales at a price of 1.

\subsection{Analytic Results}

 The first theorem relates the optimal expected revenue $\genv{n}$ and the optimal expected price $p^*_n(t; \qvec)$ to the term $\genbeta{n}$, yielding a closed form expression for $\genv{n}$ and $p^*_n(t; \qvec)$. Note that $\genmu$ appears in the formula, something which is not noticed when customers can only order one item at a time.
 
\begin{theorem}
The optimal expected revenue with $\lambda(p,t)=a(t)p^{-\ep}$ and variable order sizes is given by
\begin{equation}
    v_n(t; \textbf{q}) = \genmu\genbeta{n} A(t)^{1/\ep} \label{eq_optimal_v}
\end{equation}
for all integers $n$. Furthermore, the optimal price is given by
$$p_n^*(t; \textbf{q})=\genbeta{n}^{-1/(\ep-1)}A(t)^{1/\ep}$$\label{thm_optimal_expected_revenue}
\end{theorem}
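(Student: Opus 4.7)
The plan is to verify the formula by induction on $n$, checking that the proposed $v_n(t;\qvec)=\genmu \genbeta{n} A(t)^{1/\ep}$ satisfies both the ODE (\ref{vn-prime}) derived from the Hamilton-Jacobi equation and the boundary condition $v_n(T;\qvec)=0$; then reading off the optimal price from (\ref{eq_maximal_p}). By the Principle of Optimality noted after (\ref{vn-prime}), satisfying the HJB equation with these boundary conditions is sufficient to identify $v_n$ as the optimal expected revenue.

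First I would handle the base cases. For $n\le 0$, the recursion (\ref{eq_genbeta}) sets $\genbeta{n}=0$, so the formula gives $v_n(t;\qvec)=0$, matching the convention $v_n\equiv 0$ for $n\le 0$. For general $n$, since $A(T)=0$, the formula automatically gives $v_n(T;\qvec)=0$. So only the ODE remains.

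For the inductive step, assume the formula holds for indices $n-1,\dots,n-M$ and compute the right-hand side of (\ref{vn-prime}). The inner difference becomes
\begin{equation*}
v_n(t;\qvec)-\sum_{i=1}^M q_i v_{n-i}(t;\qvec)=\genmu A(t)^{1/\ep}\Bigl(\genbeta{n}-\sum_{i=1}^M q_i \genbeta{n-i}\Bigr),
\end{equation*}
and the defining recursion (\ref{eq_genbeta}) rewrites the parenthesized factor as $\kep\,\genbeta{n}^{-1/(\ep-1)}$. Plugging this into (\ref{vn-prime}) and collecting exponents, $\genmu^{\ep}\cdot\genmu^{1-\ep}=\genmu$, $(\ep-1)^{\ep-1}\cdot(\ep-1)^{1-\ep}=1$, and $\ep^{-\ep}\cdot\ep^{\ep-1}=\ep^{-1}$, so the right-hand side simplifies to $-a(t)\genmu\,\genbeta{n}\,A(t)^{(1-\ep)/\ep}/\ep$. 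On the other hand, differentiating the proposed formula and using $A'(t)=-a(t)$ yields exactly the same expression. Thus the ODE is satisfied, completing the induction. The formula for $\optp{n}$ then follows by substituting $v_n-\sum q_i v_{n-i}=\genmu A(t)^{1/\ep}\cdot\kep\,\genbeta{n}^{-1/(\ep-1)}$ into (\ref{eq_maximal_p}); the $\genmu$ and $\kep$ factors cancel cleanly, leaving $\genbeta{n}^{-1/(\ep-1)}A(t)^{1/\ep}$.

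The computations are essentially bookkeeping, so the only genuine subtlety is ensuring the induction can start, i.e.\ that the recursion (\ref{eq_genbeta}) actually defines a unique non-negative $\genbeta{n}$ at each step. Writing $S_{n-1}=\sum_{i=1}^M q_i\genbeta{n-i}$, the recursion asks for $\genbeta{n}>0$ satisfying $f(\genbeta{n})=\kep$ where $f(x)=x^{1/(\ep-1)}(x-S_{n-1})$. Since $\ep>1$, the map $f$ is continuous and strictly increasing on $(S_{n-1},\infty)$, vanishes at $S_{n-1}$, and tends to infinity, so a unique positive root exists. This justifies treating $\genbeta{n}$ as a well-defined sequence in the inductive argument, and it is the one point where I would need to mention the hypothesis $\ep>1$ explicitly.
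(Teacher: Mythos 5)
Your proposal is correct and follows essentially the same route as the paper: induction on $n$, verifying that $\genmu\genbeta{n}A(t)^{1/\ep}$ satisfies the ODE (\ref{vn-prime}) with the boundary conditions, then substituting into (\ref{eq_maximal_p}) to read off the price. Your added check that the recursion (\ref{eq_genbeta}) has a unique positive root (so $\genbeta{n}$ is well defined) is a nice touch the paper only supplies later, inside the proof of Theorem \ref{thm_nondecreasing}.
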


\begin{proof}
We show through induction that
\begin{equation}
    \genv{n} = \genmu\genbeta{n}A(t)^{1/\ep}. \label{eq-induction-assumption}
\end{equation}
For $n\leq 0$, by definition, $\genbeta{n}=0$ and $\genv{n}=0$, showing that (\ref{eq-induction-assumption}) holds. Assume Equation (\ref{eq-induction-assumption}) holds up to $n$. Recall Equation (\ref{vn-prime}),
$$\genvprime{n+1}= -a(t)\genmu^{\ep}\frac{(\ep-1)^{\ep-1}}{\ep^\ep}\left(v_{n+1}(t; \qvec) - \sum_{i=1}^M q_i v_{{n+1}-i}(t; \qvec))\right)^{1-\ep}.$$
Suppress $t$ and $\textbf{q}$ dependencies for space, and apply the induction assumption to get
\begin{equation}
v_{n+1}'= -a\mu^{\ep}\frac{(\ep-1)^{\ep-1}}{\ep^\ep}\left(v_{n+1} - \sum_{i=1}^M q_i \mu\beta_{n+1-i}A^{1/\ep}\right)^{1-\ep}.   \label{eq_v_nplus1} 
\end{equation}
To verify the induction assumption for $n+1$, we need only verify the desired result holds when substituted into (8). The left-hand side of (8) is
\begin{align*}
    v'_{n+1} & = \mu\beta_{n+1}\frac{1}{\ep}A^{(1/\ep)-1}\frac{dA}{dt} \\
    & = \mu\beta_{n+1}\frac{1}{\ep}A^{(1-\ep)/\ep}(-a) \\
    & = -a\mu A^{(1-\ep)/\ep}\frac{1}{\ep}\beta_{n+1},
\end{align*}
and the right-hand side is
\begin{align*}
    & - a\mu^{\ep}\frac{(\ep-1)^{\ep-1}}{\ep^\ep}\left(\mu\beta_{n+1}A^{1/\ep} - \sum_{i=1}^M q_i \mu\beta_{n+1-i}A^{1/\ep}\right)^{1-\ep} \\
     = & - a\mu\frac{(\ep-1)^{\ep-1}}{\ep^\ep}A^{(1-\ep)/\ep}\left(\beta_{n+1} - \sum_{i=1}^M q_i \beta_{n+1-i}\right)^{1-\ep} \\
     = &  -a\mu A^{(1-\ep)/\ep}\frac{1}{\ep} \left(\frac{\ep-1}{\ep}\right)^{\ep-1}\left(\beta_{n+1} - \sum_{i=1}^M q_i \beta_{n+1-i}\right)^{1-\ep} \\
     = & -a\mu A^{(1-\ep)/\ep}\frac{1}{\ep}\beta_{n+1},
\end{align*}
showing that the left- and right-hand sides of (\ref{eq_v_nplus1}) are equal and verifying the induction assumption for $n+1$. Therefore for all $n$,
$$\genv{n}=\genmu\genbeta{n} A(t)^{1/\ep}.$$

Furthermore, we can substitute this into Equation (\ref{eq_maximal_p}) to get

\begin{equation}
    \form{p}^*=\frac{\ep}{\ep-1}\genmu^{-1}C=\frac{\ep}{\ep-1}\genmu^{-1}\left(v_n(t; \qvec) - \sum_{i=1}^M q_i v_{n-i}(t; \qvec)\right)
\end{equation}
\begin{align*}
    p^* & = \frac{\ep}{\ep-1}\genmu^{-1}\left(v_n(t; \qvec) - \sum_{i=1}^M q_i v_{n-i}(t; \qvec)\right) \\
    & = \frac{\ep}{\ep-1}\genmu^{-1}\left(\genmu\genbeta{n}A(t)^{1/\ep}- \sum_{i=1}^M q_i \genmu\genbeta{n-i}A(t)^{1/\ep}\right) \\
    & = A(t)^{1/\ep}\frac{\ep}{\ep-1}\left(\genbeta{n}- \sum_{i=1}^M q_i \genbeta{n-i}\right) \\
    & = A(t)^{1/\ep}\left(\left(\frac{\ep-1}{\ep}\right)^{\ep-1}\left(\genbeta{n}- \sum_{i=1}^M q_i \genbeta{n-i}\right)^{1-\ep}\right)^{-1/(\ep-1)} \\
    & = \genbeta{n}^{-1/(\ep-1)}A(t)^{1/\ep}.
\end{align*}
Finishing the proof.
\end{proof}

The previous proof shows relationships between $\genv{n}$ and $\genbeta{n}$ and $p^*_n(t;\qvec)$ and $\genbeta{n}$, but to understand these equations we need to understand $\genbeta{n}$ better. The first theorem about $\genbeta{n}$ shows that it is non-decreasing in $n$. With Theorem \ref{thm_optimal_expected_revenue} in mind, this means that as $n$ increases, $v_n(t;\qvec)$ never decreases, which matches with our intuition. It also means that $p_n^*(t;\qvec)$ never increases as $n$ increases, which also matches out intuition. After the following theorem we look at the long term behavior of $\genbeta{n}$ to get a sense how much it changes relative to $n$.

\begin{theorem}
$\genbeta{n}$ is a non-decreasing sequence in $n$. \label{thm_nondecreasing}
\end{theorem}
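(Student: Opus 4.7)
The plan is to proceed by strong induction on $n$, using the defining equation to compare $\beta_n$ with $\beta_{n-1}$ via a monotonicity argument. Setting $s_n := \sum_{i=1}^M q_i \beta_{n-i}$ and $f_n(x) := x^{1/(\epsilon-1)}(x - s_n)$, the recursive relation (\ref{eq_genbeta}) says that $\beta_n$ is a positive root of $f_n(x) = \frac{\epsilon-1}{\epsilon}$. Since $\epsilon > 1$, a short calculation shows $f_n'(x) > 0$ for $x \geq s_n$, so $f_n$ is strictly increasing from $f_n(s_n) = 0$ to $\infty$ on $[s_n, \infty)$; hence $\beta_n$ is the unique solution there and necessarily satisfies $\beta_n > s_n$.

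The base cases are immediate: for $n \leq 0$ both $\beta_n$ and $\beta_{n-1}$ equal $0$, and for $n = 1$ the sum $s_1$ vanishes, giving $\beta_1 = \left(\frac{\epsilon-1}{\epsilon}\right)^{(\epsilon-1)/\epsilon} > 0 = \beta_0$. For the inductive step, assume $\beta_k \geq \beta_{k-1}$ for all $k \leq n-1$. Two consequences are needed. First, shifting the inductive hypothesis by $i$ gives $\beta_{n-i} \geq \beta_{n-1-i}$ for each $i \geq 1$, so $s_n \geq s_{n-1}$. Second, since $\beta_{n-i} \leq \beta_{n-1}$ for $i \geq 1$ and the $q_i$ form a probability distribution, the convex combination satisfies $s_n \leq \beta_{n-1}$, placing $\beta_{n-1}$ in the region where $f_n$ is increasing.

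Combining these,
$$f_n(\beta_{n-1}) = \beta_{n-1}^{1/(\epsilon-1)}(\beta_{n-1} - s_n) \leq \beta_{n-1}^{1/(\epsilon-1)}(\beta_{n-1} - s_{n-1}) = f_{n-1}(\beta_{n-1}) = \frac{\epsilon-1}{\epsilon} = f_n(\beta_n),$$
and since both $\beta_{n-1}$ and $\beta_n$ lie in $[s_n, \infty)$ where $f_n$ is strictly increasing, inverting gives $\beta_{n-1} \leq \beta_n$, closing the induction.

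The main obstacle I anticipate is the small but essential verification that $\beta_{n-1} \geq s_n$, which is what permits the monotonicity comparison to be applied; without it, one would a priori only know $f_n(\beta_{n-1})$ is bounded above by $f_n(\beta_n)$, not that this translates into an inequality of the arguments. Fortunately this reduces to the observation that a convex combination of values each bounded by $\beta_{n-1}$ cannot exceed $\beta_{n-1}$, so the argument remains clean. Everything else is bookkeeping on the recursive definition.
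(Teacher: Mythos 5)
Your proof is correct and follows essentially the same route as the paper: induction on $n$, using the induction hypothesis to compare the coefficient $\sum_{i=1}^M q_i\beta_{n-i}$ with $\sum_{i=1}^M q_i\beta_{n-1-i}$ and then exploiting monotonicity of the defining map $x\mapsto x^{1/(\ep-1)}(x-s)$ to conclude $\beta_{n-1}\le\beta_n$. The only difference is mechanical: the paper compares the two one-variable functions (current step versus previous step) at the common level $\frac{\ep-1}{\ep}$ via uniqueness of the positive root, whereas you evaluate the single current-step function at $\beta_{n-1}$ and $\beta_n$ and invert, which requires—and you correctly supply—the extra observation $\sum_{i=1}^M q_i\beta_{n-i}\le\beta_{n-1}$.
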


\begin{proof}
    For $n\geq 0$, $\genbeta{-n}=0$. Proceed by induction and assume $\genbeta{k-1}\leq\genbeta{k}$ for all $k$ from 0 to $n-1$.
    From the definition in (\ref{eq_genbeta}), we have $$\kep=\genbeta{n}^{\frac{1}{\ep-1}}\left(\genbeta{n}-\sum_{i=1}^Mq_i \genbeta{n-i}\right),$$
    or multiplied out as
    $$\kep=\genbeta{n}^{\frac{\ep}{\ep-1}}-\sum_{i=1}^Mq_i \genbeta{n-i}\genbeta{n}^{\frac{1}{\ep-1}}. \label{eq_solve_for_beta_n}$$
    Note that to solve for $\genbeta{n}$ we do so iteratively, meaning that $\genbeta{m}$ for any $m<n$ would already be known and treated as constants. Thus we can treat the right hand side of the above equation as a function of the variable $\genbeta{n}$, so define
    \begin{equation*}
        f(x):=x^{\frac{\ep}{\ep-1}}-\sum_{i=1}^Mq_i \genbeta{n-i}x^{\frac{1}{\ep-1}},
    \end{equation*}
    which capture these features. Solving for $\genbeta{n}$ would be equivalent to solving for $x$ when $\kep=f(x)$. We also have
    \begin{equation*}
        g(x):=x^{\frac{\ep}{\ep-1}}-\sum_{i=1}^Mq_i \genbeta{(n-1)-i}x^{\frac{1}{\ep-1}},
    \end{equation*}
    which we would use to find $\genbeta{n-1}$ instead of $\genbeta{n}$.
    
    By the induction hypothesis, the coefficients in $f(x)$ are less than or equal to those in $g(x)$. Therefore, for positive $x$, $f(x)\leq g(x)$. Thus we know the positive solution to $f(x)=\kep$ must be greater than or equal to the solution to $g(x)=\kep$, or in other words, $\genbeta{n}\geq\genbeta{n-1}$. Note also that
    \begin{align*}
        f(x^{\ep-1}) & :=x^{\ep}-\sum_{i=1}^Mq_i \genbeta{n-i}x \\
        g(x^{\ep-1}) & :=x^\ep-\sum_{i=1}^Mq_i \genbeta{(n-1)-i}x,
    \end{align*}
    by which we can see that $f(x)$ and $g(x)$ start at 0, then decrease until some $x$, then increase from then on. Therefore the equations $\kep=f(x)$ and $\kep=g(x)$ have only one solution each for $x>0$. This completes the induction proof. Therefore $\genbeta{n}$ is a non-decreasing sequence in $n$.
\end{proof}

Next we wish to find the long-term behavior of $\genbeta{n}$ in order to understand how it changes relative to $n$. We find, in the next theorem, that
\begin{equation}
\lim\limits_{n\rightarrow\infty}\frac{\beta_n(\textbf{q})}{n^{\kep}}=\mu(\textbf{q})^{\negkep}.\label{eq_temp_result}
\end{equation}
To make the above ratio easier to work with, define $\gengamma{n}=0$ for  $n\leq 0$ and 
$$\gengamma{n}=\frac{\genbeta{n}}{n^{\kep}}$$
for $n>0$. When customers order only one item, $\genmu$ is actually a monotone increasing term. It also bounded above by the limiting value $\result$. However, when generalizing to variable order sizes, $\genmu$ is no longer monotone, and also isn't bounded above by $\result$ for all values of $n$. This creates two problems in proving the limit in (\ref{eq_temp_result}), and is why we have different proof techniques than those used in McAfee and te Velde \cite{Mcafee}. To prove this limit, we instead look to the liminf and limsup. A pair of technical lemmas describe some qualities of $\limsup\limits_{n\rightarrow\infty}\genbeta{n}$ and $\liminf\limits_{n\rightarrow\infty}\genbeta{n}$, which will combine nicely when we finally reach the proof for (\ref{eq_temp_result}). Recall $M$ is the maximum order size.

\begin{restatable}{lemma}{lemgammabounds}\label{lem_gamma_bounds}
    \begin{enumerate}[label={(\alph*)}]
        \item $ \liminf\limits_{n\rightarrow\infty} \gengamma{n}>0.$ \label{lem_gamma_bounds_a}
        \item If there exists a strictly increasing sequence $(N_k)_k\subset \mathbb{N}$ such that $\gengamma{N_k}=\min\limits_{0\leq i\leq M} \gengamma{N_k-i}$ for all $k$, then $\liminf\limits_{n\rightarrow\infty}\gengamma{n}\geq \result$.\label{lem_gamma_bounds_b}
        \item If there exists an $N\geq M$ such that $\gengamma{N}=\max\limits_{0\leq i\leq M}\gengamma{N-i}$, then $\limsup\limits_{n\rightarrow\infty}\gengamma{n}\leq\result$. \label{lem_gamma_bounds_c}
    \end{enumerate}
\end{restatable}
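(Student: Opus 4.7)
The plan is to work with the defining recursion from (\ref{eq_genbeta}),
\[
\kep=\genbeta{n}^{1/(\ep-1)}\Bigl(\genbeta{n}-\sum_{i=1}^M q_i\,\genbeta{n-i}\Bigr),
\]
rewritten in terms of $\gengamma{m}=\genbeta{m}/m^{\kep}$, together with the elementary Taylor estimate $n^{\kep}-\sum_{i=1}^M q_i(n-i)^{\kep}=\kep\,\genmu\,n^{-1/\ep}(1+o(1))$, which follows from $(1-i/n)^{\kep}=1-\kep\cdot(i/n)+O(n^{-2})$ and $\sum_i i\,q_i=\genmu$. Substituting $\genbeta{m}=\gengamma{m}\,m^{\kep}$ turns the recursion into an algebraic relation among the ratios $\gengamma{n}/\gengamma{n-i}$ that drives each of the three parts.

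For part (a), I would invoke the monotonicity of $\genbeta{n}$ from Theorem \ref{thm_nondecreasing}. Since $\genbeta{n-i}\geq\genbeta{n-M}$ for $1\leq i\leq M$, the recursion gives
\[
\genbeta{n}-\genbeta{n-M}\geq\kep\,\genbeta{n}^{-1/(\ep-1)}.
\]
Telescoping along $n=M,2M,3M,\ldots$ produces a Riemann sum approximating $\int b^{1/(\ep-1)}\,db$, yielding $\genbeta{kM}\geq c\,k^{\kep}$ for some $c>0$. Dividing by $(kM)^{\kep}$ and using monotonicity to interpolate between multiples of $M$ gives $\gengamma{n}\geq c\,M^{-\kep}(1-o(1))$, hence $\liminf\gengamma{n}>0$.

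For part (b), the hypothesis $\gengamma{N_k}\leq\gengamma{N_k-i}$ for $1\leq i\leq M$ translates to $\genbeta{N_k-i}\geq\gengamma{N_k}(N_k-i)^{\kep}$, whence
\[
\genbeta{N_k}-\sum_{i=1}^M q_i\genbeta{N_k-i}\leq\gengamma{N_k}\Bigl(N_k^{\kep}-\sum_{i=1}^M q_i(N_k-i)^{\kep}\Bigr)=\gengamma{N_k}\cdot\kep\,\genmu\,N_k^{-1/\ep}(1+o(1)).
\]
Equating the left side with the recursion value $\kep\,\gengamma{N_k}^{-1/(\ep-1)}N_k^{-1/\ep}$ and rearranging gives $\gengamma{N_k}^{\ep/(\ep-1)}\geq\genmu^{-1}(1-o(1))$, i.e.\ $\gengamma{N_k}\geq\result(1-o(1))$. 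To upgrade from this subsequential estimate to the full $\liminf$, I would show that any subsequence $(m_j)\to\infty$ realizing the $\liminf$ admits a \emph{walk-down} refinement that itself satisfies the min hypothesis of (b): starting at $m_j$, repeatedly move to an index in $\{m_j-M,\ldots,m_j-1\}$ with strictly smaller $\gengamma{}$. This terminates in finitely many integer steps (using (a) and strict decrease on the finite set of indices visited), and the terminus satisfies the min condition with $\gengamma{\cdot}\leq\gengamma{m_j}$, so the lower bound transfers.

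Part (c) is handled dually: the hypothesis $\gengamma{N}\geq\gengamma{N-i}$ gives $\genbeta{N-i}\leq\gengamma{N}(N-i)^{\kep}$, and the same computation with reversed inequalities yields $\gengamma{N}\leq\result(1+o(1))$. The upgrade to $\limsup$ proceeds via a running-maximum argument: every strict advance of $\max_{M\leq m\leq n}\gengamma{m}$ occurs at an index that automatically satisfies the hypothesis of (c), so each such advance is capped by $\result+o(1)$, which controls the full sequence. The main obstacle throughout is precisely this passage from a single-index estimate to a statement about $\liminf$ or $\limsup$: unlike the $\mu=1$ case treated in \cite{Mcafee}, $\gengamma{n}$ is neither monotone nor bounded by its asymptotic limit, and the min- and max-subsequence hypotheses together with the walk-down and running-max constructions are exactly what let the single-index bounds propagate.
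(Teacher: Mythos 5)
Your part (a) is correct, and it is in fact a different (and more quantitative) route than the paper's: the paper argues by contradiction, extracting a subsequence realizing a zero liminf that also has the local-minimum property and contradicting the recursion via the auxiliary function $f(n;\qvec)=n\bigl(1-\sum_{i=1}^M q_i((n-i)/n)^{\kep}\bigr)$ of Lemma \ref{lem_f_limit}, whereas your telescoping of $\genbeta{n}-\genbeta{n-M}\geq\kep\,\genbeta{n}^{-1/(\ep-1)}$ gives directly $\genbeta{KM}^{\ep/(\ep-1)}\geq\kep K$ and hence the explicit bound $\liminf_n\gengamma{n}\geq(\kep/M)^{\kep}>0$. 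Your single-index estimate at a min-property index in (b) also matches the paper's (which packages your Taylor step into Lemma \ref{lem_f_limit}).

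The gaps are in the two ``upgrade'' steps. In (b), the walk-down terminus need not tend to infinity: nothing in the construction prevents the walks started at the liminf-realizing indices $m_j$ from funnelling down to a bounded set of indices, and your lower bound at min-property indices is only asymptotic (the $1-o(1)$ is as the index tends to infinity). If the termini are bounded you conclude only $\liminf_n\gengamma{n}\geq\gengamma{n^*}$ for some fixed small $n^*$, and small-index values can lie strictly below $\result$ (e.g.\ $\gengamma{1}=(\kep)^{\kep}<\result$ whenever $\genmu<\ep/(\ep-1)$), so the bound does not transfer. The paper instead enlarges $(N_k)_k$ to \emph{all} min-property indices and shows that every $n$ between two consecutive ones satisfies $\gengamma{n}>\gengamma{N_{a_k}}$, so the liminf of the full sequence is controlled by the liminf along the special indices, which do tend to infinity. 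In (c) the problem is worse: the hypothesis supplies a single fixed $N$, so ``$\gengamma{N}\leq\result(1+o(1))$'' has no content at that index, and your running-maximum argument tacitly assumes the running maximum advances infinitely often at indices tending to infinity; if the records stop (or the last record sits at a moderate index), the asymptotic cap says nothing and you only get $\limsup_n\gengamma{n}\leq\gengamma{K^*}$ for that fixed record index. The paper closes exactly this hole with Lemma \ref{lem_f_limit}: $f(\cdot;\qvec)$ is \emph{decreasing} with limit $\genmu\kep$, hence $f(N;\qvec)\geq\genmu\kep$ for every $N>M$, which gives the exact, non-asymptotic bound $\gengamma{N}\leq\result$ at every max-property index; combined with the observation that the first later index $K$ with $\gengamma{K}\geq\gengamma{N}$ again has the max property, this caps the whole tail by $\result$ even when the chain of records terminates. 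Replacing your Taylor $o(1)$ by this monotone bound, and the walk-down by the paper's between-consecutive-special-indices bookkeeping, would repair the argument.
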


The other lemma shows important relationships between $\limsup \gengamma{n}$ and $\liminf\gengamma{n}$. The proof of this lemma is quite detailed, but provides the bulk of the work towards proving Theorem \ref{thm_big_result}.

\begin{restatable}{lemma}{lemrelations} \label{lem_limsup_liminf_relations}
    \begin{enumerate}[label={(\alph*)}]
        \item
        \begin{equation*}
            \frac{1}{\limsup\limits_{n\rightarrow\infty}\gengamma{n}^{\frac{1}{\ep-1}}}\leq \genmu\liminf\limits_{n\rightarrow\infty}\gengamma{n}.
        \end{equation*} \label{lem_limsup_liminf_relations_a}
        \item
        \begin{equation*}
            \frac{1}{\liminf\limits_{n\rightarrow\infty}\gengamma{n}^{\frac{1}{\ep-1}}}\geq \genmu\limsup\limits_{n\rightarrow\infty}\gengamma{n}.
        \end{equation*} \label{lem_limsup_liminf_relations_b}
    \end{enumerate}
\end{restatable}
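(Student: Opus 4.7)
The plan is to exploit the recurrence (\ref{eq_genbeta}) in normalized form together with elementary power inequalities, and to read off the two inequalities by passing to the limit along carefully chosen subsequences. Write $\bar\gamma := \limsup_{n\to\infty} \gengamma{n}$ and $\underline\gamma := \liminf_{n\to\infty} \gengamma{n}$, which are finite and positive by Lemma \ref{lem_gamma_bounds}(a) together with the boundedness of $\gengamma{n}$.

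First, substituting $\genbeta{n} = \gengamma{n}\, n^{\kep}$ into (\ref{eq_genbeta}) and using $\kep + 1/\ep = 1$ gives the normalized identity $n\bigl(\gengamma{n} - \sum_{i=1}^M q_i\, \gengamma{n-i}\,((n-i)/n)^{\kep}\bigr) = \kep\, \gengamma{n}^{-1/(\ep-1)}$. Since $0 < \kep < 1$, the elementary inequalities $1 - i/n \leq ((n-i)/n)^{\kep} \leq 1 - \kep\, i/n$ hold for $i = 1, \ldots, M$. Substituting both into the identity, and writing $D_n := n\bigl(\gengamma{n} - \sum_{i=1}^M q_i \gengamma{n-i}\bigr)$, yields the two-sided sandwich
\begin{equation*}
\kep\, \gengamma{n}^{-1/(\ep-1)} - \sum_{i=1}^M q_i\, i\, \gengamma{n-i} \;\leq\; D_n \;\leq\; \kep\Bigl(\gengamma{n}^{-1/(\ep-1)} - \sum_{i=1}^M q_i\, i\, \gengamma{n-i}\Bigr).
\end{equation*}

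For (a) and (b), I would select subsequences $(n_k)$ along which the relevant quantities converge (by boundedness plus a diagonal extraction) and take limits through the appropriate side of the sandwich. Part (a) uses the upper side with $\gengamma{n_k}^{-1/(\ep-1)}$ approaching its liminf $1/\bar\gamma^{1/(\ep-1)}$, while part (b) uses the lower side with $\gengamma{n_k}^{-1/(\ep-1)}$ approaching its limsup $1/\underline\gamma^{1/(\ep-1)}$. The sign of $D_n$ along the chosen subsequence is what dictates which extremum the weighted sum $\sum q_i\, i\, \gengamma{n_k-i}$ can be pushed toward, producing the asymmetric roles of $\bar\gamma$ and $\underline\gamma$ on the two sides of the claimed inequalities.

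The main obstacle is a coupling observation: along any subsequence with $\gengamma{n_k} \to c$ for $c \in \{\underline\gamma, \bar\gamma\}$, the zeroth-order consequence of the normalized identity (namely $\gengamma{n} - \sum q_i \gengamma{n-i} = o(1)$) together with the eventual containment $\gengamma{n_k - i} \in [\underline\gamma - o(1), \bar\gamma + o(1)]$ forces $\gengamma{n_k - i} \to c$ for every $i$ with $q_i > 0$. This coupling prevents a naive subsequence from simultaneously pushing $\gengamma{n_k}$ to one extreme and $\sum q_i\, i\, \gengamma{n_k-i}$ toward the other; extracting the cross-extreme inequalities (a) and (b) therefore requires a delicate sign partition of $D_n$ and a case analysis that dispatches degenerate cases via the hypotheses of Lemma \ref{lem_gamma_bounds}(b)--(c). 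This coupling obstruction is the principal reason the argument must depart substantially from the monotone single-item analysis in \cite{Mcafee}.
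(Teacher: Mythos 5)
There is a genuine gap, and it lies exactly where your plan becomes vague: the passage from the pointwise sandwich to the limit. Your normalized identity and the bounds $1-i/n \leq \left(\frac{n-i}{n}\right)^{\kep} \leq 1-\kep\, i/n$ are fine, but the quantity $D_n = n\left(\gengamma{n}-\sum_{i=1}^M q_i\gengamma{n-i}\right)$ cannot have its sign (or even its order of magnitude) controlled along the subsequences you need. If $\gengamma{n_k}\to\limsup_n\gengamma{n}$, all you know about the neighboring terms is $\gengamma{n_k-i}\leq \limsup_n\gengamma{n}+\epsilon_k$, so $\gengamma{n_k}-\sum_i q_i\gengamma{n_k-i}\geq -2\epsilon_k$; after multiplying by $n_k$ this gives $D_{n_k}\geq -2n_k\epsilon_k$, which is useless, and symmetrically at the liminf. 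The single-$n$ relation only says the difference is $O(1/n)$, so any subsequence extraction washes out precisely the information you need; ``the sign of $D_n$ dictates which extremum the weighted sum can be pushed toward'' is the step that does not survive scrutiny. Your proposed rescue via the hypotheses of Lemma \ref{lem_gamma_bounds}(b)--(c) does not work either: those parts are conditional on the existence of window-minimizing or window-maximizing subsequences, and the whole point of Lemma \ref{lem_limsup_liminf_relations} is that it must hold unconditionally, since Theorem \ref{thm_big_result} invokes it in the case where both of those properties fail.

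The missing idea in the paper's proof is aggregation over a long window rather than a pointwise limit. One rewrites $\genbeta{n}-\sum_i q_i\genbeta{n-i}$ in terms of first differences $\Delta\genbeta{k}$, sums the defining relation from $N_\delta$ to $N$ so the main term telescopes to $\genmu(\genbeta{N}-\genbeta{N_\delta-1})$, bounds $\sum_{n=N_\delta}^N n^{-1/\ep}$ below by $\frac{\ep}{\ep-1}\left(N^{\kep}-N_\delta^{\kep}\right)$, and then shows the boundary correction $S$ satisfies $S/N^{\kep}\to 0$ using the estimate $\Delta\genbeta{n}\leq \kep\,\genbeta{n}^{-1/(\ep-1)}$, which follows from monotonicity of $\genbeta{n}$. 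Dividing by $N^{\kep}$, taking $\liminf$ (resp.\ $\limsup$) in $N$, and letting $\delta\to 0$ produces the cross inequalities (a) and (b) with the correct constants; note also that your linearization is lossy (it misplaces factors of $\kep$), whereas the telescoping route needs no such linearization. Without this summation step your outline cannot reach the stated inequalities.
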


We now present the main result of the paper, which shows the long term behavior of $\genbeta{n}$. In McAfee and te Velde (2008), they obtain this type of result as well, but their model orders of size 1. The proof techniques here are quite different from that paper, highlighting that the generalization to variable order sizes is non-trivial. Note that again the average order size $\genmu$ appears, this time determining a scaling factor for the limit.

\begin{theorem}
    For any choice of $\ep>1$ and order size distribution $\qvec$,
    $$\lim\limits_{n\rightarrow\infty}\frac{\beta_n(\textbf{q})}{n^{\kep}}=\mu(\textbf{q})^{\negkep}.$$ \label{thm_big_result}
\end{theorem}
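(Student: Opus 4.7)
The plan is to set $L := \liminf_{n \to \infty}\gengamma{n}$ and $U := \limsup_{n \to \infty}\gengamma{n}$ and prove that $L = U = \result$, which, since $\gengamma{n} = \genbeta{n}/n^{\kep}$, is equivalent to the stated limit. Lemma \ref{lem_gamma_bounds}(a) already gives $L > 0$, and $L \leq U$ is automatic, so the entire task is to squeeze the two quantities together at $\result$.

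I would first reduce the whole theorem to the single upper bound $U \leq \result$. Granting this, Lemma \ref{lem_limsup_liminf_relations}(a) --- which, using the continuity and monotonicity of $x \mapsto x^{1/(\ep-1)}$ on $(0,\infty)$, is equivalent to $U^{-1/(\ep-1)} \leq \genmu\, L$ --- yields
\[
\genmu\, L \;\geq\; U^{-1/(\ep-1)} \;\geq\; (\result)^{-1/(\ep-1)} \;=\; \genmu^{1/\ep},
\]
so $L \geq \genmu^{1/\ep - 1} = \result$. Combining with $L \leq U \leq \result$ forces $L = U = \result$ and closes the argument.

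The remaining work is to establish $U \leq \result$ by verifying the hypothesis of Lemma \ref{lem_gamma_bounds}(c): produce some $N \geq M$ with $\gengamma{N} = \max_{0\leq i\leq M}\gengamma{N-i}$. I would split on whether $U$ is attained by the sequence $(\gengamma{n})$. If $U$ is attained at any index, then it must be attained infinitely often, since $\sup_{n \geq K}\gengamma{n} \geq U$ for every $K$. In that case, for sufficiently large $K$ the tail supremum equals $U$ and is realized at some $n^* \geq K + M$; the window $[n^* - M, n^*]$ then lies in the tail, so all its values are $\leq U = \gengamma{n^*}$ and $n^*$ serves as the required index. If $U$ is not attained, then for any large $K$ the tail $\{n \geq K\}$ has supremum $U$ but approached only from below, which forces the running maximum on the tail to strictly increase infinitely often; each strict-record index $N_k$ satisfies $\gengamma{N_k} > \gengamma{m}$ for all $K \leq m < N_k$, so $\gengamma{N_k} = \max_{0 \leq i \leq M}\gengamma{N_k - i}$ once $N_k - M \geq K$.

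The main obstacle is precisely this case split: because $\gengamma{n}$ can in principle exceed $U$ by vanishing amounts, a naive subsequence argument along $\gengamma{n_k} \to U$ fails (neighboring values within the window might also be close to $U$ and dominate $\gengamma{n_k}$), and one must instead isolate either a genuinely attained value together with a sufficiently quiet tail, or a bona fide strict record. Once the window-maximum index $N$ is secured, Lemma \ref{lem_gamma_bounds}(c) delivers $U \leq \result$ and the rest is the short algebraic consequence shown above.
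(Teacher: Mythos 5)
Your algebraic reduction is fine: granting Lemma \ref{lem_limsup_liminf_relations}\ref{lem_limsup_liminf_relations_a}, the single bound $\limsup_{n}\gengamma{n}\leq\result$ does force $\liminf_n\gengamma{n}\geq\result$ and hence the limit. The gap is in how you propose to obtain that bound. You assume the hypothesis of Lemma \ref{lem_gamma_bounds}\ref{lem_gamma_bounds_c} --- an index $N\geq M$ with $\gengamma{N}=\max_{0\leq i\leq M}\gengamma{N-i}$ --- can always be produced, and your attained/not-attained dichotomy does not deliver it. The sub-claims are false as stated: attainment of $U=\limsup_n\gengamma{n}$ at one index does not imply attainment infinitely often (a sequence can touch $U$ once and otherwise stay strictly below it), and non-attainment of $U$ does not make the tail suprema equal $U$, because $\gengamma{n}$ may exceed $U$ infinitely often and approach it from above --- the paper stresses precisely that, unlike the single-order-size case, $\gengamma{n}$ is neither monotone nor bounded above by $\result$. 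In the ``approach from above'' scenario (for instance $\gengamma{n}$ eventually strictly decreasing toward its limit) there are no strict records in any tail, every tail supremum is attained only at the very front of the tail, and in fact $\gengamma{N-1}>\gengamma{N}$ for all large $N$, so no window-maximum index exists at all and Lemma \ref{lem_gamma_bounds}\ref{lem_gamma_bounds_c} is simply unavailable. Nothing in your argument rules out this behavior of the specific recursively defined sequence; in effect you have assumed that the paper's ``Property 2'' always holds.

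This is exactly why the paper's proof splits into three cases rather than one. When a subsequence of window minima exists (Property 1) it uses Lemma \ref{lem_gamma_bounds}\ref{lem_gamma_bounds_b} together with Lemma \ref{lem_limsup_liminf_relations}\ref{lem_limsup_liminf_relations_b} --- the dual of the half you use; when a window maximum exists (Property 2) it argues essentially as you do, via Lemma \ref{lem_gamma_bounds}\ref{lem_gamma_bounds_c} and Lemma \ref{lem_limsup_liminf_relations}\ref{lem_limsup_liminf_relations_a}; and when neither holds it needs a genuinely different argument, bounding
$$\limsup_{n\rightarrow\infty}\gengamma{n}-\liminf_{n\rightarrow\infty}\gengamma{n}\leq\gengamma{b_k}-\gengamma{a_k}\leq\frac{1}{b_k^{\kep}}\left(\frac{M}{\genbeta{a_k+1}^{1/(\ep-1)}}\right)$$
using the telescoping estimate $\genbeta{n}-\genbeta{n-1}\leq\left(\kep\right)\genbeta{n}^{-1/(\ep-1)}$, letting $k\rightarrow\infty$ to conclude the limit exists, and only then invoking both parts of Lemma \ref{lem_limsup_liminf_relations} to identify it as $\result$. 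Your proposal covers only the second of these three situations; to repair it you would have to either prove that a window-maximum index always exists for this particular sequence (which is not known and is what the paper's Cases 1 and 3 avoid having to decide) or add the dual window-minimum argument and the telescoping case.
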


\begin{proof}
    Substitute $\gengamma{n}n^{\kep}$ for $\genbeta{n}$ into (\ref{eq_genbeta}) to get
    \begin{align}
        \kep=&\gengamma{n}^{\frac{1}{\ep-1}}n^{1/\ep}(\gengamma{n} n^{\kep}-\sum_{i=1}^Mq_i\gengamma{n-i}(n-i)^{\kep}) \nonumber\\
        =&\gengamma{n}^{\frac{1}{\ep-1}}n\left(\gengamma{n}-\sum_{i=1}^Mq_i\gengamma{n-i}\left(\frac{n-i}{n}\right)^{\kep}\right) \label{eq_gengamma}.
    \end{align}
    
    The argument will be split into three cases, based on the following two properties:
    \begin{description}
        \item[Property 1:] Suppose there exists an increasing sequence $(N_k)_k$ such that $\gengamma{N_k}=\min\limits_{0\leq i\leq M} \gengamma{N_k-i}$ for all $k$. 
        \item[Property 2:] Suppose there exists an $N\geq M$ such that $\gengamma{N}=\max\limits_{0\leq i\leq M}\gengamma{N-i}$.
    \end{description}
    
    \textbf{Case 1:} Suppose Property 1 holds. By Lemma $\hyperref[lem_gamma_bounds]{\ref*{lem_gamma_bounds}\ref*{lem_gamma_bounds_b}}$,
    $$\liminf\limits_{n\rightarrow\infty}\gengamma{n}\geq\result .$$
    Using this with Lemma $\hyperref[lem_limsup_liminf_relations]{\ref*{lem_limsup_liminf_relations}\ref*{lem_limsup_liminf_relations_b}}$, we get
    \begin{equation*}
        \frac{1}{(\result)^{\frac{1}{\ep-1}}}\geq \frac{1}{\liminf\limits_{n\rightarrow\infty}\gengamma{n}^{\frac{1}{\ep-1}}}\geq \genmu\limsup\limits_{n\rightarrow\infty}\gengamma{n},
    \end{equation*}
    and simplifying $\genmu$ terms yields
    $$\result\geq \limsup\limits_{n\rightarrow\infty}\gengamma{n}.$$
    Therefore
    $$\result \geq \limsup\limits_{n\rightarrow\infty}\gengamma{n}\geq \liminf\limits_{n\rightarrow\infty}\gengamma{n} \geq \result,$$
    and equality holds throughout.
    
    \textbf{Case 2:} Suppose Property 2 holds. By Lemma $\hyperref[lem_gamma_bounds_c]{\ref*{lem_gamma_bounds}\ref*{lem_gamma_bounds_c}}$,
    $$\limsup\limits_{n\rightarrow\infty}\gengamma{n}\leq\result .$$
    Using this with lemma $\hyperref[lem_limsup_liminf_relations]{\ref*{lem_limsup_liminf_relations}\ref*{lem_limsup_liminf_relations_a}}$ we get
     \begin{equation*}
        \frac{1}{(\result)^{\frac{1}{\ep-1}}}\leq \frac{1}{\limsup\limits_{n\rightarrow\infty}\gengamma{n}^{\frac{1}{\ep-1}}}\leq \genmu\liminf\limits_{n\rightarrow\infty}\gengamma{n},
    \end{equation*}
    and simplifying the $\genmu$ terms yields
    $$\result\leq \liminf\limits_{n\rightarrow\infty}\gengamma{n}.$$
    Therefore
    $$\result \geq \limsup\limits_{n\rightarrow\infty}\gengamma{n}\geq \liminf\limits_{n\rightarrow\infty}\gengamma{n} \geq \result,$$
    and equality holds throughout.
    
    \textbf{Case 3: } Suppose Properties 1 and 2 are both false. Since Property 1 is false, there exists an $N_1>M$ such that for all $n\geq N_1$, $\gengamma{n}\neq \min\limits_{0\leq i\leq M}\gengamma{n-i}$. Let $(N_k)_k$ be a strictly increasing sequence starting with $N_1$. Now let $N_k-M\leq a_k\leq N_k$ be such that $\gengamma{a_k}=\min\limits_{0\leq i\leq M}\gengamma{N_k-i}$. Since Property 1 is false, we have that for all $n>a_k$
    \begin{equation*}
        \gengamma{a_k}<\gengamma{n},
    \end{equation*}
    and thus
    \begin{equation}
        \gengamma{a_k}\leq\liminf\limits_{n\rightarrow\infty}\gengamma{n}.\label{eq_liminf_bound}
    \end{equation}
    Let $N_k-M\leq b_k\leq N_k$ be such that $\gengamma{b_k}=\max\limits_{0\leq i \leq M}\gengamma{N_k-i}$. Since Property 2 is false we have for all $n> b_k$ that
    \begin{equation*}
        \gengamma{n}<\gengamma{b_k},
    \end{equation*}
    and thus
    \begin{equation}
        \limsup\limits_{n\rightarrow\infty}\gengamma{n}\leq \gengamma{b_k}. \label{eq_limsup_bound}
    \end{equation}
    Equations (\ref{eq_liminf_bound}) and (\ref{eq_limsup_bound}) together imply that
    \begin{equation}
        \limsup\limits_{n\rightarrow\infty}\gengamma{n}-\liminf\limits_{n\rightarrow\infty}\gengamma{n}\leq \gengamma{b_k}-\gengamma{a_k}. \label{eq_limsup-liminf_leq_bk-ak}
    \end{equation}
    Thus if we find a bound $\gengamma{b_k}-\gengamma{a_k}$, we have a bound for $\limsup\limits_{n\rightarrow\infty}\gengamma{n}-\liminf\limits_{n\rightarrow\infty}\gengamma{n}$. Further if such a bound goes to 0, we will obtain the useful result
    $$\limsup\limits_{n\rightarrow\infty}\gengamma{n}=\liminf\limits_{n\rightarrow\infty}\gengamma{n}=\lim\limits_{n\rightarrow\infty}\gengamma{n}.$$
    
    To help towards finding a bound, note Equation (\ref{eq_beta_step_difference}) from the proof of Lemma \ref{lem_limsup_liminf_relations} in the appendix:
    $$\genbeta{n}-\genbeta{n-1}\leq \left(\kep\right)\left(\frac{1}{\genbeta{n}^{1/(\ep-1)}}\right).$$
    Since $\gengamma{n}$ is a function of $\genbeta{n}$, we wish to use the above inequality by obtaining some other inequality involving $\genbeta{n}-\genbeta{n-1}$ terms. Suppose that $a_k\leq b_k$. Then
    \begin{align}
        \gengamma{b_k}-\gengamma{a_k} & = \frac{\genbeta{b_k}}{b_k^\kep}-\frac{\genbeta{a_k}}{a_k^\kep} \nonumber \\
        & \leq \frac{\genbeta{b_k}}{b_k^\kep}-\frac{\genbeta{a_k}}{b_k^\kep} \nonumber \\
        & = \frac{1}{b_k^\kep}\left(\genbeta{b_k}-\genbeta{a_k}\right) \nonumber \\
        & = \frac{1}{N_k^\kep}\sum_{i=a_k+1}^{b_k}\genbeta{i}-\genbeta{i-1} \nonumber \\
        & \leq \frac{1}{b_k^\kep}\sum_{i=a_k+1}^{b_k}\left(\kep\right)\left(\frac{1}{\genbeta{i}^{1/(\ep-1)}}\right) \nonumber \\
        & \leq \frac{1}{b_k^\kep}\sum_{i=a_k+1}^{b_k}\left(\kep\right)\left(\frac{1}{\genbeta{a_k+1}^{1/(\ep-1)}}\right) \nonumber \\
        & \leq \frac{1}{b_k^\kep}\left(\frac{M}{\genbeta{a_k+1}^{1/(\ep-1)}}\right), \label{eq_bound_for_bk-ak}
    \end{align}
    where the second to last inequality is justified because $\genbeta{n}$ is monotone increasing and the last inequality is justified because $b_k-a_k\leq M$ by definition. If $a_k\geq b_k$, the above inequalities no longer work; however, we can choose our sequence $(N_k)_k$ without loss of generality so that it has the property
    \begin{equation}
        \gengamma{N_k-M}= \min\limits_{0\leq i\leq M} \gengamma{N_k-i},\label{eq_sequence_property}
    \end{equation}
    which ensures $a_k\leq b_k$.
    
    Begin with a generic increasing sequence $(P_k)_k$. Since Property 1 is not true, there exists a $k_0$ such that for all $n\geq P_{k_0}$, $\gengamma{n}\neq \min\limits_{0\leq i\leq M} \gengamma{n-i}$. For any $k\geq k_0$, let $0<l_k\leq M$ be such that $\gengamma{P_k-l_k}=\min\limits_{0\leq i\leq M}\gengamma{P_k-i}$. If $l_k=M$ we are done. Otherwise, since Property 1 is not true, $\gengamma{P_k-l_k}=\min\limits_{0\leq i\leq M}\gengamma{P_k+1-i}$. Repeating this argument, $\gengamma{P_k-l_k}=\min\limits_{0\leq i\leq M}\gengamma{N_k+M-l_k-i}$. Therefore the sequence $(P_k+M-l_k)_{k\geq k_0}$ has satisfies (\ref{eq_sequence_property}). Remove any duplicate terms from this sequence to get the desired strictly increasing sequence $(N_k)_k$.
    
    Now combining (\ref{eq_limsup-liminf_leq_bk-ak}) and (\ref{eq_bound_for_bk-ak}) we get, for each $k$,
    $$\limsup\limits_{n\rightarrow\infty}\gengamma{n}-\liminf\limits_{n\rightarrow\infty}\gengamma{n}\leq \gengamma{b_k}-\gengamma{a_k}\leq \frac{1}{b_k^\kep}\left(\frac{M}{\genbeta{a_k}^{1/(\ep-1)}}\right).$$
    Taking the limit as $k\rightarrow\infty$ shows that
    $$\limsup\limits_{n\rightarrow\infty}\gengamma{n}-\liminf\limits_{n\rightarrow\infty}\gengamma{n}\leq 0,$$
    or in other words, $$\limsup\limits_{n\rightarrow\infty}\gengamma{n}=\liminf\limits_{n\rightarrow\infty}\gengamma{n}=\lim\limits_{n\rightarrow\infty}\gengamma{n}.$$
    Substituting this into Lemma \ref{lem_limsup_liminf_relations}(a) and Lemma \ref{lem_limsup_liminf_relations}(b) shows
    \begin{equation*}\begin{split}
        \frac{1}{\lim\limits_{n\rightarrow\infty}\gengamma{n}^{\frac{1}{\ep-1}}} & \leq \genmu\lim\limits_{n\rightarrow\infty}\gengamma{n} \\
        \frac{1}{\lim\limits_{n\rightarrow\infty}\gengamma{n}^{\frac{1}{\ep-1}}}& \geq \genmu\lim\limits_{n\rightarrow\infty}\gengamma{n},
    \end{split}\end{equation*}
    meaning equality holds,
    $$\frac{1}{\lim\limits_{n\rightarrow\infty}\gengamma{n}^{\frac{1}{\ep-1}}}= \genmu\lim\limits_{n\rightarrow\infty}\gengamma{n}.$$
    Finally, solving this gives
    $$\lim\limits_{n\rightarrow\infty}\gengamma{n}=\result .$$
\end{proof}

Although we have a closed form expression for $\genbeta{n}$, in practice this can be computationally difficult value to find, a fact which is exacerbated with more possible order sizes and large inventory. Theorem \ref{thm_big_result} indicates that $(n/\genmu)^{\kep}$ can be a good approximation $\genbeta{n}$, since the two terms have the same asymptotic behavior in $n$.

So far we have established a closed form expression for $\genv{n}$ and have an understanding of the long-term behavior of $\genbeta{n}$, and therefore $\genv{n}$ as well. An important question is: ``How valuable is including the extra information of variable order sizes'' The next theorem shows that when two models have the same demand $\lambda(p,t)\genmu$, their asymptotic behavior in $n$ is the same. This can greatly simplify the modeling process, as a model with a complicated order size probability distribution can be approximated with a simpler one, thus improving practical computations. In particular this means that a model with variable order sizes can be approximated with a model of order size 1, the type of model widely used in the literature.

\begin{theorem}
    Comparable models (i.e. those with the same demand) have the same asymptotic behavior in $n$. \label{thm_same_long_term}
\end{theorem}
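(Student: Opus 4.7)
The plan is direct: combine the closed-form expressions of Theorem \ref{thm_optimal_expected_revenue} with the asymptotic supplied by Theorem \ref{thm_big_result}, and then observe that the resulting leading-order expressions for $\genv{n}$ and $\optp{n}$ depend on $\qvec$ only through the product $\genmu\,a(t)$, which by definition is shared across comparable models.

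First I would substitute $\genbeta{n}\sim\genmu^{\negkep}\,n^{\kep}$ (from Theorem \ref{thm_big_result}) into $\genv{n}=\genmu\genbeta{n}A(t)^{1/\ep}$. The exponent arithmetic $1+\negkep = 1/\ep$ collapses the two $\genmu$ factors, yielding $\genv{n}\sim (\genmu A(t))^{1/\ep}\,n^{\kep}$ as $n\to\infty$. An analogous computation for the optimal price uses $\genbeta{n}^{-1/(\ep-1)}\sim \genmu^{1/\ep}\,n^{-1/\ep}$ (since $-\negkep/(\ep-1)=1/\ep$), giving $\optp{n}\sim (\genmu A(t)/n)^{1/\ep}$.

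Next I would record that two comparable models share the same demand $\lambda(p,t)\genmu = a(t)p^{-\ep}\genmu$, so that the product $a(t)\genmu$ is invariant across the pair (both models carry the same $\ep$, as noted after equation (\ref{eq_ep})). Since $A(t)=\int_t^T a(s)\,ds$ enters both asymptotics only through $\genmu A(t)=\int_t^T a(s)\genmu\,ds$, this quantity is identical for any two comparable models. Hence the leading-order expressions for $\genv{n}$ and $\optp{n}$ coincide across comparable models; equivalently, the ratios of the two models' optimal revenues (respectively optimal prices) converge to $1$ as $n\to\infty$, which is what is meant by ``the same asymptotic behavior in $n$.''

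There is no real obstacle here: all the heavy lifting was carried out in Theorem \ref{thm_big_result}. The only point requiring care is the exponent bookkeeping showing that the combined power of $\genmu$ reduces cleanly to $1/\ep$ in both formulas---it is exactly this cancellation that lets the asymptotics factor through the demand-invariant product $\genmu A(t)$ rather than depending on $\genmu$ and $a(t)$ separately.
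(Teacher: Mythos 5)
Your proposal is correct and takes essentially the same route as the paper: it combines the closed form of Theorem \ref{thm_optimal_expected_revenue} with the limit of Theorem \ref{thm_big_result} and then observes that the leading term $(\genmu A(t))^{1/\ep}n^{\kep}$ depends on the model only through the demand-invariant product $a(t)\genmu$, exactly the identification the paper makes via $a_1(t)=a_2(t)\mu(\textbf{w})/\genmu$. The only cosmetic difference is that the paper states the conclusion as a triangle-inequality bound on the difference of the two optimal revenues while you state it as ratios tending to $1$ (and you also record the price asymptotics), but the substance is the same.
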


\begin{proof}
    Consider two models:
    \begin{equation*}\begin{array}{lr}
        \text{\textbf{Model 1}} & \\
        \text{Arrival Rate} & \lambda_1(p,t)=a_1(t)p^{-\ep} \\
        \text{Maximum order size} & M_1 \\
        \text{Order size distribution} & \qvec=(q_1,q_2,\ldots,q_{M_1}) \\
        \\
        \text{\textbf{Model 2}} & \\
        \text{Arrival Rate} & \lambda_2(p,t)=a_2(t)p^{-\ep} \\
        \text{Maximum order size} & M_2 \\
        \text{Order size distribution} & \textbf{w}=(w_1,w_2,\ldots,w_{M_2})
    \end{array}\end{equation*}
    such that their demands are equal, i.e. $$\lambda_1(p,t)\genmu=\lambda_2(p,t)\mu(\textbf{w}).$$ Substitute for $\lambda_1(p,t)$ and $\lambda_2(p,t)$ and solve $a_1(t)$ to get $a_1(t)=\frac{a_2(t)\mu(\textbf{w})}{\genmu}$. Define $A_i(t)=\int_t^T a_i(t)dt$ for $i=1,2$.
    
    Let $\delta>0$. By Theorems \ref{thm_optimal_expected_revenue} and \ref{thm_big_result}, there exists $N_1>0$ such that for all $n>N_1$,
    $$ \left|\genv{n} - \genmu \left(\frac{n}{\genmu}\right)^{\frac{\ep-1}{\ep}} A_1(t)^{\frac{1}{\ep}} \right| < \delta,$$
    and $N_2>0$ such that for all $n>N_2$,
    $$ \left|v_n(t; \textbf{w}) - \mu(\textbf{w}) \left(\frac{n}{\mu(\textbf{w})}\right)^{\frac{\ep-1}{\ep}} A_2(t)^{\frac{1}{\ep}} \right| < \delta,$$
    Note that 
    \begin{align*}
        \genmu \left(\frac{n}{\genmu}\right)^{\frac{\ep-1}{\ep}} A_1(t)^{\frac{1}{\ep}} & = \genmu \left(\frac{n}{\genmu}\right)^{\frac{\ep-1}{\ep}} \left(\int_t^T \frac{a_2(t)\mu(\textbf{w})}{\genmu}dt\right)^{\frac{1}{\ep}} \\
        & = \mu(\textbf{w})^{\frac{1}{\ep}} n^{\frac{\ep-1}{\ep}} \left(\int_t^T a_2(t)dt\right)^{\frac{1}{\ep}} \\
        & = \mu(\textbf{w}) \left(\frac{n}{\mu(\textbf{w})}\right)^{\frac{\ep-1}{\ep}} A_2(t)^{\frac{1}{\ep}}.
    \end{align*}
    Then for $n>\max(N_1,N_2)$,
    \begin{align*}
        \left| \genv{n} - v_n(t; \textbf{w})\right| & = \left| \genv{n} - \genmu \left(\frac{n}{\genmu}\right)^{\frac{\ep-1}{\ep}} A_1(t)^{\frac{1}{\ep}} + \mu(\textbf{w}) \left(\frac{n}{\mu(\textbf{w})}\right)^{\frac{\ep-1}{\ep}} A_2(t)^{\frac{1}{\ep}} - v_n(t; \textbf{w})\right| \\
        & \leq \left| \genv{n} - \genmu \left(\frac{n}{\genmu}\right)^{\frac{\ep-1}{\ep}} A_1(t)^{\frac{1}{\ep}}\right| + \left| \mu(\textbf{w}) \left(\frac{n}{\mu(\textbf{w})}\right)^{\frac{\ep-1}{\ep}} A_2(t)^{\frac{1}{\ep}} - v_n(t; \textbf{w})\right| \\
        & < 2\delta
    \end{align*}
    
    Thus proving the claim that the optimal expected revenue for two models with the same demand is asymptotically the same in $n$.
\end{proof}

\begin{corollary}
    A model with variable order sizes may be approximated with a model with only one order size.
\end{corollary}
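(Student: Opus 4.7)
The plan is to reduce the corollary directly to Theorem \ref{thm_same_long_term} by exhibiting, for any given variable-order-size model, an explicit comparable single-order-size model. Start with an arbitrary instance of our framework: arrival rate $\lambda_1(p,t) = a_1(t) p^{-\ep}$, maximum order size $M_1$, and order size distribution $\qvec$ with average order size $\genmu$. I would then construct the candidate approximating model by taking order size distribution $\textbf{w} = (1)$ (so $M_2 = 1$ and $\mu(\textbf{w}) = 1$), with arrival rate $\lambda_2(p,t) = a_2(t) p^{-\ep}$ where $a_2(t) := a_1(t)\,\genmu$. Both models share the same demand elasticity $\ep$ by construction, and a one-line check,
$$\lambda_1(p,t)\,\genmu = a_1(t)\,\genmu\, p^{-\ep} = a_2(t)\, p^{-\ep} = \lambda_2(p,t)\,\mu(\textbf{w}),$$
shows that their demands agree, i.e.\ the models are comparable in the sense defined earlier.

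Having established comparability, Theorem \ref{thm_same_long_term} applies and gives $|v_n(t;\qvec) - v_n(t;\textbf{w})| \to 0$ as $n \to \infty$, which is precisely the assertion that the single-order-size model $\textbf{w}$ approximates the variable-order-size model $\qvec$ in the large-inventory regime. There is no substantive obstacle here: all of the analytic work, including the convergence $\gengamma{n} \to \result$ of Theorem \ref{thm_big_result} and the asymptotic equivalence of comparable models, is already carried out in the preceding results. The only thing worth flagging in the write-up is that the degenerate choice $M_2 = 1$, $\textbf{w} = (1)$ is a permissible instance of the framework of Section 2 (for which the base cases and the recursion \eqref{eq_genbeta} specialize cleanly to the standard single-item model studied in \cite{Mcafee}), so the appeal to Theorem \ref{thm_same_long_term} is legitimate.
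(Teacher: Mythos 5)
Your proposal is correct and follows essentially the same route as the paper: exhibit a comparable single-order-size model by matching demand (the paper uses a point mass $\textbf{w}=(0,\ldots,0,1)$ with $\lambda_1\mu(\qvec)=\lambda_2\mu(\textbf{w})$, while you take $\textbf{w}=(1)$ and explicitly set $a_2(t)=a_1(t)\mu(\qvec)$) and then invoke Theorem \ref{thm_same_long_term}. Your version is, if anything, slightly more explicit about the construction of the comparable arrival rate.
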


\begin{proof}
     Consider two models. One with customer arrival rate $\lambda_1(p,t)$ and demand distribution $\qvec=(q_1,q_2,\ldots,q_{M_1})$ and another with customer arrival rate $\lambda_2(p,t)$ and a demand distribution of only one order size, i.e. the demand distribution is $\textbf{w}=(w_1,w_2,\ldots,w_{M_2})=(0,0,\ldots,0,1)$ such that their demand are equal, i.e. $\lambda_1(p,t)\genmu=\lambda_2(p,t)\mu(\textbf{w})$. By Theorem \ref{thm_same_long_term}, any model with the same demand has the same long-term behavior.
\end{proof}

The above proof gives a nice comparison to the basic model. It shows that generalizing the basic model to include information about how many items each customer buys will not significantly affect the optimal expected revenue for large $n$. That said, it is still useful to understand how close the optimal expected revenue is for comparable models for small values of $n$. In the next section we present some numerical results to address this question.

\subsection{Numerical Results}

So far we have established a closed form expression for the optimal expected revenue $\genv{n}$ and the optimal pricing strategy $\optp{n}$, whose formulas both involve $\genbeta{n}$. We have also found that comparable models have the same asymptotic behavior in $n$. We now want to answer the question of how close are the optimal expected revenues for two comparable models?

Since we will be comparing different models, our definitions must be expanded to include dependence on more variables. Recall from (\ref{eq_optimal_v}) that $$v_n(t; \textbf{q}) = \genmu\genbeta{n} A(t)^{1/\ep}=\genmu\genbeta{n}\left(\int_t^Ta(s)ds\right)^{1/\ep},$$
where $\lambda(p,t)=a(t)p^{-\ep}$. Define $v_n(t;\qvec,\lambda(p,t))$ as the optimal expected revenue for a model with order size distribution $\qvec$ and customer arrival rate $\lambda(p,t)=a(t)p^{-\ep}$.

Consider two comparable models, one with order size distribution $\qvec$ and arrival rate $\lambda_1(p,t)$, and the other with order  size distribution $\textbf{w}$ and arrival rate $\lambda_2(p,t)$ such that they are comparable, i.e. $\lambda_1(p,t)\genmu=\lambda_2(p,t)\mu(\textbf{w})$. Define the relative difference function between comparable models by
$$g_{n,t}(\qvec,\textbf{w}):= \frac{v_n(t; \qvec, \lambda_1(p,t))-v_n(t; \textbf{w}, \lambda_2(p,t))}{v_n(t; \textbf{w}, \lambda_2(p,t)))},$$
for $0\leq t\leq T$ and $n>0$. For simplicity of notation, we suppress the dependency of $g$ on $\lambda_1(p,t)$ and $\lambda_2(p,t)$.

To get numerical results, there is a large decision space for our variables. The ones which provide the most interesting comparisons are the demand elasticity $\ep$, the customer arrival rate $\lambda$ and the order size distribution $\qvec$. We will keep the other variables constant. For simplicity, assume that $a(t)=1$ and $T=1$.

Now we define several order size distributions in order to make comparisons.  Let $\delta_i$ be the distribution with full weight on the $i$-th component, i.e. each customer orders $i$ items. Let $\qvec_1=(0.25,0.25,0.25,0.25)$, i.e. there is an equal probability a person will buy 1, 2, 3, or 4 items, $\qvec_2=(0,0.4,0,0.6)$, and $\qvec_3=(0.7,0.1,0.2)$. Using these distributions, we calculated relative differences between the standard model and comparable variable order size models using these distributions. See tables.

\linespread{1}
\begin{table}[H]
\centering
\caption{Relative differences when $\ep = 1.25$, $\lambda\mu = 24$}
\sisetup{
table-number-alignment = center,
table-figures-integer = 3,
table-figures-decimal = 4
}
\begin{tabular}{
|S|
*{6}{S[table-auto-round]|}
}
\hline
{$n$}   & {$g_{n,0}(\single{2}, \single{1})$}  & {$g_{n,0}( \single{3}, \single{1})$}  & {$g_{n,0}( \single{4}, \single{1})$}  & {$g_{n,0}( \qvec_1, \single{1})$} & {$g_{n,0}(\qvec_2, \single{1})$} & {$g_{n,0}(\qvec_3, \single{1})$} \\
\hline
{1} & 	0.1486984 & 	0.2457309 & 	0.3195079 & 	0.2011244 & 	0.2619147 & 	0.0844718 \\ 
{2} & 	-0.1328733 & 	-0.0596255 & 	-0.0039329 & 	-0.0429339 & 	-0.0474088 & 	-0.0231511 \\ 
{3} & 	0.0046259 & 	-0.1775695 & 	-0.1288620 & 	-0.1046290 & 	-0.0876039 & 	-0.0742376 \\ 
{4} & 	-0.0775928 & 	0.0003246 & 	-0.2001560 & 	-0.1035342 & 	-0.1622745 & 	-0.0465427 \\ 
{5} & 	-0.0086428 & 	-0.0597425 & 	-0.0040568 & 	-0.0556868 & 	-0.0334583 & 	-0.0394805 \\ 
{6} & 	-0.0553971 & 	-0.1040868 & 	-0.0510274 & 	-0.0643441 & 	-0.0790423 & 	-0.0388159 \\ 
{7} & 	-0.0110279 & 	-0.0152622 & 	-0.0877650 & 	-0.0632933 & 	-0.0566836 & 	-0.0342953 \\ 
{8} & 	-0.0434383 & 	-0.0475339 & 	-0.1176606 & 	-0.0567364 & 	-0.0875978 & 	-0.0308723 \\ 
{9} & 	-0.0112095 & 	-0.0745633 & 	-0.0197554 & 	-0.0493902 & 	-0.0395037 & 	-0.0286288 \\ 
{10} & 	-0.0359328 & 	-0.0172763 & 	-0.0442650 & 	-0.0481420 & 	-0.0635196 & 	-0.0265025 \\ 
{50} & 	-0.0091048 & 	-0.0131046 & 	-0.0165278 & 	-0.0142713 & 	-0.0190570 & 	-0.0074057 \\ 
{100} & 	-0.0050270 & 	-0.0054788 & 	-0.0140861 & 	-0.0081375 & 	-0.0107457 & 	-0.0041680 \\ 
{200} & 	-0.0027641 & 	-0.0043028 & 	-0.0077772 & 	-0.0045826 & 	-0.0060041 & 	-0.0023240 \\ 
{500} & 	-0.0012438 & 	-0.0020015 & 	-0.0035185 & 	-0.0021120 & 	-0.0027470 & 	-0.0010601 \\ 
\hline
\end{tabular}

~\\
~\\

\caption{Relative differences when $\ep = 1.6$, $\lambda\mu = 24$}
\sisetup{
table-number-alignment = center,
table-figures-integer = 3,
table-figures-decimal = 4
}
\begin{tabular}{
|S|
*{6}{S[table-auto-round]|}
}
\hline
{$n$}   & {$g_{n,0}(\single{2}, \single{1})$}  & {$g_{n,0}( \single{3}, \single{1})$}  & {$g_{n,0}( \single{4}, \single{1})$}  & {$g_{n,0}( \qvec_1, \single{1})$} & {$g_{n,0}(\qvec_2, \single{1})$} & {$g_{n,0}(\qvec_3, \single{1})$} \\
\hline
{1} & 	0.2968396 & 	0.5098036 & 	0.6817928 & 	0.4100272 & 	0.5467896 & 	0.1642178 \\ 	
{2} & 	-0.1386604 & 	0.0027869 & 	0.1170193 & 	0.0314461 & 	0.0273524 & 	0.0248281 \\ 	
{3} & 	0.0492839 & 	-0.1886366 & 	-0.0962102 & 	-0.0666030 & 	-0.0275438 & 	-0.0468310 \\ 	
{4} & 	-0.0883758 & 	0.0613290 & 	-0.2147820 & 	-0.0767443 & 	-0.1551242 & 	-0.0318600 \\ 	
{5} & 	0.0148513 & 	-0.0440319 & 	0.0648671 & 	-0.0347320 & 	0.0081174 & 	-0.0288536 \\ 	
{6} & 	-0.0659089 & 	-0.1201062 & 	-0.0198732 & 	-0.0510670 & 	-0.0721069 & 	-0.0309355 \\ 	
{7} & 	0.0038844 & 	0.0154084 & 	-0.0848347 & 	-0.0537945 & 	-0.0356941 & 	-0.0288722 \\ 	
{8} & 	-0.0530573 & 	-0.0421870 & 	-0.1367442 & 	-0.0494446 & 	-0.0903910 & 	-0.0269208 \\ 	
{9} & 	-0.0007675 & 	-0.0896321 & 	0.0140724 & 	-0.0440299 & 	-0.0232646 & 	-0.0256774 \\ 	
{10} & 	-0.0446747 & 	0.0017949 & 	-0.0304869 & 	-0.0444376 & 	-0.0661833 & 	-0.0243602 \\ 	
{50} & 	-0.0122147 & 	-0.0153336 & 	-0.0176229 & 	-0.0163637 & 	-0.0233338 & 	-0.0083598 \\ 	
{100} & 	-0.0068493 & 	-0.0053921 & 	-0.0189806 & 	-0.0097506 & 	-0.0135933 & 	-0.0049088 \\ 	
{200} & 	-0.0038069 & 	-0.0054071 & 	-0.0106336 & 	-0.0056619 & 	-0.0077687 & 	-0.0028207 \\ 	
{500} & 	-0.0017300 & 	-0.0025837 & 	-0.0048704 & 	-0.0026842 & 	-0.0036283 & 	-0.0013240 \\ 	

\hline
\end{tabular}

\end{table}

\begin{table}[H]
\centering
\caption{Relative differences when $\ep = 1.6$, $\lambda\mu = 12$}
\sisetup{
table-number-alignment = center,
table-figures-integer = 3,
table-figures-decimal = 4
}
\begin{tabular}{
|S|
*{6}{S[table-auto-round]|}
}
\hline
{$n$}   & {$g_{n,0}(\single{2}, \single{1})$}  & {$g_{n,0}( \single{3}, \single{1})$}  & {$g_{n,0}( \single{4}, \single{1})$}  & {$g_{n,0}( \qvec_1, \single{1})$} & {$g_{n,0}(\qvec_2, \single{1})$} & {$g_{n,0}(\qvec_3, \single{1})$} \\
\hline
{1} & 	0.2968396 & 	0.5098036 & 	0.6817928 & 	0.4100272 & 	0.5467896 & 	0.1642178 \\ 	
{2} & 	-0.1386604 & 	0.0027869 & 	0.1170193 & 	0.0314461 & 	0.0273524 & 	0.0248281 \\ 	
{3} & 	0.0492839 & 	-0.1886366 & 	-0.0962102 & 	-0.0666030 & 	-0.0275438 & 	-0.0468310 \\ 	
{4} & 	-0.0883758 & 	0.0613290 & 	-0.2147820 & 	-0.0767443 & 	-0.1551242 & 	-0.0318600 \\ 	
{5} & 	0.0148513 & 	-0.0440319 & 	0.0648671 & 	-0.0347320 & 	0.0081174 & 	-0.0288536 \\ 	
{6} & 	-0.0659089 & 	-0.1201062 & 	-0.0198732 & 	-0.0510670 & 	-0.0721069 & 	-0.0309355 \\ 	
{7} & 	0.0038844 & 	0.0154084 & 	-0.0848347 & 	-0.0537945 & 	-0.0356941 & 	-0.0288722 \\ 	
{8} & 	-0.0530573 & 	-0.0421870 & 	-0.1367442 & 	-0.0494446 & 	-0.0903910 & 	-0.0269208 \\ 	
{9} & 	-0.0007675 & 	-0.0896321 & 	0.0140724 & 	-0.0440299 & 	-0.0232646 & 	-0.0256774 \\ 	
{10} & 	-0.0446747 & 	0.0017949 & 	-0.0304869 & 	-0.0444376 & 	-0.0661833 & 	-0.0243602 \\ 	
{50} & 	-0.0122147 & 	-0.0153336 & 	-0.0176229 & 	-0.0163637 & 	-0.0233338 & 	-0.0083598 \\ 	
{100} & 	-0.0068493 & 	-0.0053921 & 	-0.0189806 & 	-0.0097506 & 	-0.0135933 & 	-0.0049088 \\ 	
{200} & 	-0.0038069 & 	-0.0054071 & 	-0.0106336 & 	-0.0056619 & 	-0.0077687 & 	-0.0028207 \\ 	
{500} & 	-0.0017300 & 	-0.0025837 & 	-0.0048704 & 	-0.0026842 & 	-0.0036283 & 	-0.0013240 \\ 	
	
\hline
\end{tabular}
\end{table}

\linespread{1.5}

The data shows some interesting properties about variable order size models and their comparable standard models. First, when $n$ is small, the overselling effect is pronounced, making the optimal expected revenue larger for variable order sizes. However, the overselling effect appears to dissipate rather quickly. Once it does, we see that models with variable order sizes have a lower optimal expected revenue compared to the standard models. This makes sense because of how the model relates to real world interactions. In the standard model, if a person wishes to buy multiple items, they must do so in a small interval of time, paying slightly more for each successive purchase because prices are updated in real time. But when a customer buys multiple items at once, they pay the same price for each, resulting in slightly less revenue overall. 

The most interesting observation is upon comparing the data tables for $\ep=1.6$, $\lambda\mu=24$ and for $\ep=1.6,\lambda\mu=12$. They are identical, despite having different demands. Since the average order size $\mu(\qvec)$ for a particular model is determined by its order size distribution $\qvec$, the difference in demand is caused by a difference in the customer arrival rate $\lambda$. These tables indicate that $\lambda$ does not impact the relative difference between a variable order size model and its comparable standard model. This observation is true in general and is proved in the next theorem.

\begin{theorem}
    For any particular $0\leq t<T$ and $n>0$, the function $g_{n,t}$, the relative difference function between the optimal expected revenues for comparable models, is completely determined by their order size distributions. \label{thm_comparable_model_differences}
\end{theorem}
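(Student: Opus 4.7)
The plan is to use the closed-form expression from Theorem \ref{thm_optimal_expected_revenue} to write both optimal revenues explicitly, form the ratio $v_n(t;\qvec,\lambda_1)/v_n(t;\textbf{w},\lambda_2)$, and then use the comparability condition $\lambda_1(p,t)\genmu=\lambda_2(p,t)\mu(\textbf{w})$ to kill all dependence on the arrival-rate time profiles $a_1(t),a_2(t)$ (and hence on $t$ itself).

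First I would observe that the comparability hypothesis $a_1(t)\genmu = a_2(t)\mu(\textbf{w})$ holds pointwise in $t$, so integrating from $t$ to $T$ gives
$$A_1(t)\genmu = A_2(t)\mu(\textbf{w}), \qquad\text{i.e.}\qquad \frac{A_1(t)}{A_2(t)} = \frac{\mu(\textbf{w})}{\genmu}.$$
This is the only fact we really need from the comparability condition, and it is independent of $t$.

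Next, by Theorem \ref{thm_optimal_expected_revenue},
$$v_n(t;\qvec,\lambda_1(p,t)) = \genmu\,\genbeta{n}\,A_1(t)^{1/\ep}, \qquad v_n(t;\textbf{w},\lambda_2(p,t)) = \mu(\textbf{w})\,\beta_n(\textbf{w})\,A_2(t)^{1/\ep}.$$
Form the ratio and substitute the identity for $A_1(t)/A_2(t)$:
$$\frac{v_n(t;\qvec,\lambda_1)}{v_n(t;\textbf{w},\lambda_2)} = \frac{\genmu\,\genbeta{n}}{\mu(\textbf{w})\,\beta_n(\textbf{w})}\left(\frac{A_1(t)}{A_2(t)}\right)^{1/\ep} = \frac{\genbeta{n}}{\beta_n(\textbf{w})}\left(\frac{\genmu}{\mu(\textbf{w})}\right)^{\kep}.$$
The right-hand side depends only on $\qvec$, $\textbf{w}$, $n$, and $\ep$; in particular, it no longer involves $a_1$, $a_2$, or $t$. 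Hence
$$g_{n,t}(\qvec,\textbf{w}) = \frac{v_n(t;\qvec,\lambda_1)}{v_n(t;\textbf{w},\lambda_2)} - 1 = \frac{\genbeta{n}}{\beta_n(\textbf{w})}\left(\frac{\genmu}{\mu(\textbf{w})}\right)^{\kep} - 1,$$
which is determined entirely by the order size distributions.

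There is essentially no obstacle here; the theorem is a direct corollary of the closed form in Theorem \ref{thm_optimal_expected_revenue} combined with the fact that comparability forces $a_1(t)\genmu \equiv a_2(t)\mu(\textbf{w})$ for every $t$. The only small point to be careful about is that I am using the comparability condition pointwise in $t$ (not just some time-averaged version), which is legitimate because \eqref{eq_ep} and the definition of demand are pointwise in $t$. The surprise observed in the numerical tables --- that changing $\lambda$ while preserving $\lambda\mu$ leaves $g_{n,t}$ unchanged --- is then transparent from the factorization of $v_n$ into a product of a $\qvec$-part ($\genmu\genbeta{n}$) and a $\lambda$-part ($A(t)^{1/\ep}$), together with the fact that the $\lambda$-parts of two comparable models differ only by a constant factor determined by the $\mu$'s.
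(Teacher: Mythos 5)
Your proposal is correct and follows essentially the same route as the paper: both apply the closed form $v_n(t;\qvec)=\genmu\genbeta{n}A(t)^{1/\ep}$ from Theorem \ref{thm_optimal_expected_revenue} and use the comparability condition to turn the ratio $A_1(t)/A_2(t)$ into the constant $\mu(\textbf{w})/\genmu$, so that all dependence on $a_1,a_2,t$ cancels. Your final expression $g_{n,t}=\bigl(\genmu/\mu(\textbf{w})\bigr)^{\kep}\genbeta{n}/\beta_n(\textbf{w})-1$ is the correct simplification (the paper's last displayed line appears to drop a factor of $\mu(\textbf{w})$ from the denominator of the first term, but this does not affect the conclusion).
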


\begin{proof}
    Consider a model $\mathscr{M}_1$ with order size distribution $\qvec$ and customer arrival rate $\lambda_1(p,t)$ and another model $\mathscr{M}_2$ with order size distribution $\textbf{w}$ and arrival rate $\lambda_2(p,t)$ such that the demands of $\mathscr{M}_1$ and $\mathscr{M}_2$ are equal, i.e. $\lambda_1(p,t)\genmu=\lambda_2(p,t)\mu(\textbf{w})$. Recall the optimal expected revenue from Theorem \ref{thm_optimal_expected_revenue},
    $$ v_n(t; \textbf{q}) = \genmu\genbeta{n} A(t)^{1/\ep},$$
    where $A(t)=\int_t^T a(t)dt$ for $\lambda(p,t)=a(t)p^{-\ep}$. To accommodate the different models we add the customer arrival rate dependency, so define $A(t; \lambda(p,t))=\int_t^Ta(t)dt$. Note that for a constant $c$, $A(t; c\lambda(p,t))=\int_t^Tca(t)dt=c\int_t^Ta(t)dt=cA(t; \lambda(p,t))$. Then the relative difference is

    \begin{align*}
        g_{n,t}(\qvec, \textbf{w}) & = \frac{v_n(t; \qvec, \lambda_1(p,t))-v_n(t; \textbf{w}, \lambda_2(p,t))}{v_n(t; \textbf{w}, \lambda_2(p,t))} \\
        & = \frac{v_n(t; \qvec, \lambda_2(p,t)\mu(\textbf{w})\genmu^{-1})-v_n(t; \textbf{w}, \lambda_2(p,t))}{v_n(t; \textbf{w}, \lambda_2(p,t))} \\
        & = \frac{\genmu\genbeta{n} A(t; \lambda_2(p,t)\mu(\textbf{w})\genmu^{-1})^{1/\ep}-\mu(\textbf{w})\beta_n(\textbf{w}) A(t; \lambda_2(p,t))^{1/\ep}}{\mu(\textbf{w})\beta_n(\textbf{w}) A(t; \lambda_2(p,t))^{1/\ep}} \\
        & = \frac{\genmu\genbeta{n}(\mu(\textbf{w})\genmu^{-1})^{1/\ep}A(t; \lambda_2(p,t))^{1/\ep}-\mu(\textbf{w})\beta_n(\textbf{w}) A(t; \lambda_2(p,t))^{1/\ep}}{\mu(\textbf{w})\beta_n(\textbf{w}) A(t; \lambda_2(p,t))^{1/\ep}} \\
        & = \frac{\genmu^{(\ep-1)/\ep}\mu(\textbf{w})^{1/\ep}\genbeta{n} - \beta_n(\textbf{w})}{ \beta_n(\textbf{w})}.
    \end{align*}
    Therefore the relative difference for a particular $n$ and $t$ between comparable models $\mathscr{M}_1$ and $\mathscr{M}_2$ is completely determined by their order size distributions $\qvec$ and $\textbf{w}$.
\end{proof}

\textbf{Remark:} Since $g_{n,t}$ is determined by order size distributions, this means that $g_{n,t}$ does not depend on the customer arrival rates $\lambda_1(p,t)$ and $\lambda_2(p,t)$ and justifies the suppression of the arrival rates in the definition of $g_{n,t}$. Theorem \ref{thm_comparable_model_differences} also proves the surprising fact that two models with same order size distribution but different sales rates still have the same relative difference to their comparable models with order size distribution $\delta_1$.

\section{Conclusion}

We have considered a dynamic pricing model for constant demand elasticity. Our generalization of the standard model was to allow customers to buy multiple items at a time. We established closed form expressions for the optimal expected revenue and optimal pricing strategies. Then we showed that two comparable models have the same asymptotic behavior in their inventory size $n$. We also showed that the relative difference between two comparable models is determined by their order size distributions, and that their customer arrival rates did not have an effect. Lastly, we provided a few numerical computations to show applications of our results.

An important insight our work provides is that a model with variable order sizes may be approximated by a model where customers can only order one item at a time. However, in practice customers usually have the option to buy more than one item at a time, which makes our variable order size model closer to reality.

This paper has only considered one specific model, when demand elasticity is constant. While these ideas can be applied to models with different customer arrival rate functions, we have shown here that the generalization to variable order sizes is non-trivial. In future work we plan to examine other types of demand functions to see if we can find similar results in these cases. Another avenue we plan to explore is variable order sizes combined with other model generalizations. For example, Helmes and Schlosser (2013) \cite{Helmes} work with constant demand elasticity but introduce advertising effects.

We believe that variable order sizes provides an interesting generalization to dynamic pricing problems, both from a theoretical and practical standpoint. It reveals more about the general structure of the problem by highlighting the role the average order size plays. We have also shown there are compelling comparisons between models which have the same demand but different order size probabilities.

\subsection*{Acknowledgement}

This research was supported in part by the Simons Foundation (grant award number 246271). We would also like to thank Kurt Helmes for his helpful insights on the asymptotic behavior of $\genbeta{n}$.

\section{Appendix}

\subsection{Model Details}

Our model looks like a compound Poisson process, but is actually a random time change of a compound Poisson process due to the dependence of demand $\lambda(p,t)$ on price $p$ and time $t$. Let $X_i$ for $1\leq i$ be IID variables with distribution $\qvec$, i.e. $P(X_i=n)=q_n$ for all $1\leq n\leq M$). Without loss of generality, assume the starting inventory equals $M$. (As we could always extend a probability distribution up to max order size $M$ is necessary). Let $N(t)$ be a counting process which counts the arrival of customers which has intensity $\lambda(p,t)$. We now have the pieces to define our process. Let $Y(t)$ be the inventory level at time $t$, which we may write $$Y(t)=M-\sum_{i=1}^N(t)X_i$$.

The process is a random time change of a compound process where the time change is defined by $\Lambda(t)=\int_0^t\lambda(p(s),s)ds$. Note that here price is dependent on time, because the time will affect the remaining inventory, which will in turn affect price. See [] for more details.

\subsection{Technical Lemmas}

The rest of the appendix provides proofs of technical lemmas. The first is used to prove another one of the lemmas, but the other two are used in the proof of Theorem \ref{thm_big_result} itself.

\begin{restatable}{lemma}{lemflimit}
    The function
    $$f(n; \textbf{q}):=n\left(1-\sum_{i=1}^Mq_i\left(\frac{n-i}{n}\right)^{\kep}\right)$$
    is decreasing for $n> M$, $n\in\R$ and
    $$\lim\limits_{n\rightarrow\infty}f(n; \textbf{q})=\mu(\textbf{q})\left(\kep\right).$$ 
    \label{lem_f_limit}
\end{restatable}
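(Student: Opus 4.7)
The plan is to prove the two assertions separately, reducing both to a single-order-size computation. Writing $f$ as a convex combination, since $\sum_{i=1}^M q_i = 1$,
$$f(n;\qvec) = \sum_{i=1}^M q_i \, g_i(n), \qquad g_i(n) := n\left(1 - \left(1 - \frac{i}{n}\right)^{\kep}\right).$$
Because a convex combination of (strictly) decreasing functions is (strictly) decreasing, monotonicity of $f$ on $(M,\infty)$ reduces to showing each $g_i$ is decreasing on $(i,\infty)$, and the limit of $f$ reduces to $\sum_{i=1}^M q_i \lim_{n\to\infty} g_i(n)$.

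For the limit, I would Taylor-expand $(1 - i/n)^{\kep} = 1 - \kep \cdot (i/n) + O(1/n^2)$ as $n \to \infty$, so $g_i(n) = \kep \cdot i + O(1/n)$, and therefore
$$\lim_{n\to\infty} f(n;\qvec) = \sum_{i=1}^M q_i \cdot \kep \cdot i = \kep \cdot \genmu.$$
The sum over $i$ is finite (running only up to $M$), so interchanging sum and limit is immediate.

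For monotonicity, set $\alpha = \kep \in (0,1)$. With $v = 1 - i/n$ (so $dv/dn = i/n^2$), a direct differentiation gives
$$g_i'(n) = 1 - (1-\alpha)\,v^\alpha - \alpha\, v^{\alpha-1}.$$
For $n > i$ we have $v \in (0,1)$, so the sign of $g_i'(n)$ equals the sign of $1 - h(v)$, where $h(u) := (1-\alpha)u^\alpha + \alpha u^{\alpha-1}$. I then show $h(u) > 1$ on $(0,1)$: observe $h(1) = 1$ and
$$h'(u) = \alpha(1-\alpha)\, u^{\alpha-2}(u - 1),$$
which is strictly negative on $(0,1)$ since $\alpha(1-\alpha) > 0$ and $u - 1 < 0$. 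Thus $h$ is strictly decreasing on $(0,1)$ and lies strictly above $h(1) = 1$ there, so $g_i'(n) < 0$ on $(i,\infty)$. Taking the convex combination, $f(\cdot;\qvec)$ is strictly decreasing on $(M,\infty)$.

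The only substantive obstacle is verifying the auxiliary inequality $h(u) > 1$ on $(0,1)$; the remaining work is routine differentiation and a standard Taylor expansion. This inequality is essentially a restatement of the strict concavity of $x \mapsto x^\alpha$ for $\alpha \in (0,1)$, and the sign analysis of $h'$ above confirms it cleanly without appealing to external facts.
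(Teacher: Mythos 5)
Your proposal is correct, and its monotonicity argument takes a genuinely different route from the paper's. For the limit, you use a Taylor expansion of $\left(1-\tfrac{i}{n}\right)^{\kep}$ while the paper applies L'H\^opital's rule to $\bigl(1-\sum_i q_i((n-i)/n)^{\kep}\bigr)/(1/n)$; these are essentially the same computation. The real difference is in proving that $f$ is decreasing: the paper differentiates the full sum twice, shows $f''(n;\qvec)>0$, and then invokes the soft argument that a convex function with a finite limit at infinity must be decreasing, whereas you decompose $f(n;\qvec)=\sum_{i=1}^M q_i\,g_i(n)$ into single-order-size pieces and determine the sign of each $g_i'$ directly, reducing it to the elementary inequality $(1-\alpha)u^{\alpha}+\alpha u^{\alpha-1}>1$ on $(0,1)$ with $\alpha=\kep\in(0,1)$ (i.e.\ strict concavity of $u\mapsto u^{\alpha}$), which your computation of $h'$ verifies cleanly; note $n>M\geq i$ guarantees $v=1-i/n\in(0,1)$ for every term, so the convex-combination step is legitimate. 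Your route buys a first-order, self-contained sign argument that avoids both the messier second-derivative computation and the indirect convexity-plus-asymptote step; the paper's route avoids your auxiliary inequality but pays for it with an extra differentiation and the additional limiting argument. Either proof supports the uses of Lemma \ref{lem_f_limit} later in the appendix.
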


\begin{proof}
    Let $n> M$, $n\in\R$. First we show that $f$ has a limiting value. Using L'Hôpital's rule,
    \begin{equation*}\begin{split}
        \lim\limits_{n\rightarrow\infty}f(n; \textbf{q}) = &\lim\limits_{n\rightarrow\infty}\frac{1-\sum_{i=1}^Mq_i\left(\frac{n-i}{n}\right)^{\kep}}{1/n}\\
        = & \lim\limits_{n\rightarrow\infty}\frac{-\sum_{i=1}^Mq_i\left(\kep\right)\left(\frac{n-i}{n}\right)^{-1/\ep}\left(\frac{i}{n^2}\right)}{-1/n^2} \\
        = & \lim\limits_{n\rightarrow\infty}\sum_{i=1}^Miq_i\left(\kep\right)\left(\frac{n-i}{n}\right)^{-1/\ep} \\
        = & \sum_{i=1}^Miq_i\left(\kep\right) \\
        = & \mu(\textbf{q})\left(\kep\right)
    \end{split}\end{equation*}
    To get $f$ decreasing we look at $f''$ to show that $f$ is concave up, and since $f$ has a limiting value, $f$ will be decreasing. First calculate $f'$ by
    \begin{equation*}\begin{split}
        f'(n; \textbf{q}) = & n\left(-\sum_{i=1}^Mq_i\left(\kep\right)\left(\frac{n-i}{n}\right)^{-1/\ep}\left(\frac{i}{n^2}\right)\right) +\left(1-\sum_{i=1}^Mq_i\left(\frac{n-i}{n}\right)^{\kep}\right) \\
        = & 1 - \sum_{i=1}^M q_i\left(\left(\kep\right)\left(\frac{n-i}{n}\right)^{-1/\ep}\left(\frac{i}{n}\right)+\left(\frac{n-i}{n}\right)^{\kep}\right) \\
        = & 1 - \sum_{i=1}^M q_i\left(\frac{n-i}{n}\right)^{-1/\ep}\left(\frac{i\ep-i}{n\ep}+\frac{n-i}{n}\right)\\
        = & 1 - \sum_{i=1}^M q_i\left(\frac{n}{n-i}\right)^{1/\ep}\left(\frac{n\ep-i}{n\ep}\right).
    \end{split}\end{equation*}
    For the second derivative we have
    \begin{align*}
        f''(n; \textbf{q}) = & - \sum_{i=1}^M q_i\left[\left(\frac{n}{n-i}\right)^{1/\ep}\left(\frac{i\ep}{n^2\ep^2}\right)+\left(\frac{1}{\ep}\right)\left(\frac{n}{n-i}\right)^{1/\ep-1}\left(\frac{-i}{(n-i)^2}\right)\left(\frac{n\ep-i}{n\ep}\right)\right] \\
        = & - \sum_{i=1}^M \frac{iq_i}{\ep}\left(\frac{n}{n-i}\right)^{1/\ep}\left[\frac{1}{n^2}-\left(\frac{n}{n-i}\right)^{-1}\left(\frac{1}{(n-i)^2}\right)\left(\frac{n\ep-i}{n\ep}\right)\right] \\
        = & - \frac{iq_i}{\ep n^2}\left(\frac{n}{n-i}\right)^{1/\ep}\left[1-\frac{n\ep-i}{n \ep -i \ep}\right]\\
        > & 0.
    \end{align*}
    Note that the inequality is justified since $\ep>1$ and thus the bracketed expression is negative. Thus $f$ is concave up, completing the proof of Lemma \ref{lem_f_limit}.
\end{proof}

\lemgammabounds*

\begin{proof}
   \textbf{Proof of part \ref{lem_gamma_bounds_a}:}  By definition, $\genbeta{n}\geq 0$, and therefore $\gengamma{n}\geq 0$ and $\liminf\limits_{n\rightarrow\infty}\gengamma{n}\geq 0$. Assume to the contrary that $\liminf\limits_{n\rightarrow\infty}\gengamma{n}=0$. Construct a subsequence $(\gengamma{N_k})_k$ of $\gengamma{n}$ such that $N_k>M$, $\lim\limits_{k\rightarrow\infty}\gengamma{N_k}=0$, and $\gengamma{N_k}\leq \min\limits_{0< i\leq M} \gengamma{N_k-i}$. Let $\delta>0$. Then there exists $k>0$ such that $\gengamma{N_k}<\delta$. By equation (\ref{eq_gengamma}),
    \begin{align}
        \kep & = \gengamma{N_k}^{\frac{1}{\ep-1}}N_k\left(\gengamma{N_k}-\sum_{i=1}^Mq_i\gengamma{N_k-i}\left(\frac{N_k-i}{N_k}\right)^{\kep}\right) \nonumber \\
        & \leq \gengamma{N_k}^{\frac{1}{\ep-1}}N_k\left(\gengamma{N_k}-\sum_{i=1}^Mq_i\gengamma{N_k}\left(\frac{N_k-i}{N_k}\right)^{\kep}\right) \nonumber \\
        & = \gengamma{N_k}^{\frac{\ep}{\ep-1}}N_k\left(1-\sum_{i=1}^Mq_i\left(\frac{N_k-i}{N_k}\right)^{\kep}\right) \nonumber \\
        & =: \gengamma{N_k}^{\frac{\ep}{\ep-1}}f(N_k; \qvec) \label{eq_gengamma_and_f}
    \end{align}
    By Lemma \ref{lem_f_limit}, $f(n;\qvec)$ is decreasing for $n> M$ and therefore $f(N_k;\qvec)\leq f(M; \qvec)$. Substituting into the above, we get
     $$ \kep\leq \gengamma{N_k}^{\frac{\ep}{\ep-1}}f(M; \qvec)\leq \delta^{\frac{\ep}{\ep-1}}f(M; \qvec),$$
    but this is a contradiction since $f(M; \qvec)$ is a constant and $\delta$ is arbitrary. Therefore,
    $$\liminf\limits_{n\rightarrow\infty}\gengamma{n}\neq 0,$$
    completing the proof of part \ref{lem_gamma_bounds_a}.
    
    \textbf{Proof of part \ref{lem_gamma_bounds_b}:} Suppose there exists an increasing sequence $(N_k)_k$ such that $N_1>M$ and $\gengamma{N_k}=\min\limits_{0\leq i\leq M} \gengamma{N_k-i}$ for all $k$. Without loss of generality, assume that $(N_k)_k$ contains every $n>M$ such that $\gengamma{n}=\min\limits_{0\leq i\leq M} \gengamma{n-i}$. First we get a bound for $\liminf\limits_{k\rightarrow\infty}\gengamma{N_k}$, then we show $\liminf\limits_{k\rightarrow\infty}\gengamma{N_k}=\liminf\limits_{n\rightarrow\infty}\gengamma{n}$.
    
    By assumption, (\ref{eq_gengamma_and_f}) holds for each $N_k$,
    $$\kep\leq\gengamma{N_k}^{\frac{\ep}{\ep-1}}f(N_k; \qvec),$$
    or
    $$\gengamma{N_k} \geq \left(\frac{\ep-1}{\ep f(N_k;\qvec)}\right)^{\kep}.$$
    Let $\delta>0$. By Lemma \ref{lem_f_limit}, there exists a $K\geq M$ such that for all $k\geq K$,
    $f(N_k; \qvec)\leq \genmu\left(\frac{\ep-1}{\ep}\right)(1+\delta)$\, and thus
    $$\gengamma{N_k} \geq \left(\frac{1}{\genmu(1+\delta)}\right)^{\kep}.$$
    Therefore
    $$\liminf_{k\rightarrow\infty}\gengamma{N_k} \geq \liminf_{k\rightarrow\infty} \left(\frac{1}{\genmu(1+\delta)}\right)^{\kep}=\left(\frac{1}{\genmu(1+\delta)}\right)^{\kep},$$
    but $\delta$ was arbitrary and so
    $$\liminf\limits_{k\rightarrow\infty}\gengamma{N_k}\geq \left(\frac{1}{\genmu}\right)^{\kep}=\result.$$
    
    For each $k\geq N_1$, define $a_k$ such that $N_{a_k}\leq k < N_{a_k+1}$. Since $(N_k)_k$ contains every $n>M$ such that $\gengamma{n}=\min\limits_{0\leq i\leq M} \gengamma{n-i}$, then $N_{a_k+1}$ is the first occurrence of any $n>N_{a_k}$ such that $\gengamma{n}\leq\gengamma{N_{a_k}}$. Thus $$\gengamma{k}> \gengamma{N_{a_k}}.$$ Hence
    $$\liminf\limits_{k\rightarrow\infty}\gengamma{k}\geq \liminf\limits_{k\rightarrow\infty}\gengamma{N_{a_k}}\geq \liminf\limits_{k\rightarrow\infty}\gengamma{N_k}\geq \result,$$
    completing the proof of part (b).
    
    \textbf{Proof of part \ref{lem_gamma_bounds_c}} Assume there exists an $N\geq M$ such that $\gengamma{N}\geq\max\limits_{0<i\leq M}\gengamma{N-i}$. Then equation (\ref{eq_gengamma_and_f}) holds, except with the inequality reversed, that is
    \begin{align}
        \kep & \geq \gengamma{N}^{\frac{\ep}{\ep-1}}f(N; \qvec) \nonumber
    \end{align}
    By lemma \ref{lem_f_limit}, $f$ is decreasing and $\lim\limits_{n\rightarrow\infty}f(n; \textbf{q})=\mu(\textbf{q})\left(\kep\right).$ Therefore, substituting into the above, we get
     $$ \kep\geq \gengamma{N}^{\frac{\ep}{\ep-1}}\mu(\textbf{q})\left(\kep\right),$$
    or
    $$\genmu^{\negkep} \geq \gengamma{N}.$$
    Suppose $K$ is the smallest $K>N$ such that $\gengamma{K}\geq\gengamma{N}$. Then $\gengamma{K}=\max\limits_{0\leq i\leq M}\gengamma{K-i}$. By what we just showed, $\gengamma{K}\leq \result$. By repetition of this argument, $\gengamma{n}\leq \result$ for all $n\geq N$ and so
    $$\limsup\limits_{n\rightarrow\infty}\gengamma{n}\leq\result.$$
\end{proof}

\lemrelations*

\begin{proof}
    (Of part 1) Starting with the definition equation for $\genbeta{n}$ in equation (\ref{eq_genbeta}),
    \begin{equation*}\begin{split}
        \left(\kep\right)\frac{1}{\genbeta{n}^{\frac{1}{\ep-1}}}& = \beta_n(\textbf{q})-\sum_{i=1}^Mq_i \beta_{n-i}(\textbf{q}) \\
         & = \sum_{i=1}^Mq_i (\genbeta{n}-\genbeta{n-i}) \\
         & = \sum_{i=1}^Mq_i \left[(\genbeta{n}-\genbeta{n-1})+\sum_{j=2}^{i}(\genbeta{n-j+1}-\genbeta{n-j})\right]\\
         & = \sum_{i=1}^Mq_i (\genbeta{n}-\genbeta{n-1})+\sum_{i=2}^Mq_i\sum_{j=2}^{i}(\genbeta{n-j+1}-\genbeta{n-j} )\\
         & = \sum_{i=1}^M iq_i (\genbeta{n}-\genbeta{n-1})-\sum_{i=2}^M (i-1)q_i (\genbeta{n}-\genbeta{n-1}) \\
         & \qquad + \sum_{i=2}^M\sum_{j=2}^{i}q_i(\genbeta{n-j+1}-\genbeta{n-j} )\\
         & = \genmu \Delta\genbeta{n}- \sum_{i=2}^M\sum_{j=2}^{i}q_i(\Delta\genbeta{n}-\Delta\genbeta{n-j+1}),
    \end{split}  \end{equation*}
    where $\Delta\genbeta{k}=\genbeta{k}-\genbeta{k-1}$ for all $k$. Substitute $\genbeta{n}=n^\kep\gengamma{n}$ into the left hand side of the above to get
    \begin{equation}
        \left(\kep\right)\frac{1}{n^{\frac{1}{\ep}}\gengamma{n}^\frac{1}{\ep-1}} = \genmu \Delta\genbeta{n}- \sum_{i=2}^M\sum_{j=2}^{i}q_i(\Delta\genbeta{n}-\Delta\genbeta{n-j+1}). \label{eq_deltas}
    \end{equation}
    Our strategy will be to examine individual terms of this equation. Through this analysis, we will ultimately arrive at the inequality for this part of the lemma:
    \begin{equation*}
        \frac{1}{\limsup\limits_{n\rightarrow\infty}\gengamma{n}^{\frac{1}{\ep-1}}}\leq \genmu\liminf\limits_{n\rightarrow\infty}\gengamma{n}.
    \end{equation*}
    
    Choose $\delta>0$ small (eventually we allow $\delta\rightarrow 0$), and $N_\delta$ large enough such that \begin{equation}
        \frac{1}{\gengamma{n}^\frac{1}{\ep-1}} \geq \frac{1}{\limsup\limits_{n\rightarrow\infty}\gengamma{n}^{\frac{1}{\ep-1}}} - \delta \label{eq_bound_delta}
    \end{equation}
    for all $n\geq N_\delta$. Such $N_\delta$ exists because if $\gengamma{n}\leq\lsup{n}\gengamma{n}$, then  (\ref{eq_bound_delta}) is satisfied trivially, and if $\gengamma{n}>\lsup{n}\gengamma{n}$, then we can choose $N_\delta$ large enough to make $\gengamma{n}$ close enough to $\lsup{n}\gengamma{n}$ to satisfy (\ref{eq_bound_delta})
    
    Let $N>N_\delta$. Eventually we will take a limit as $N\rightarrow\infty$, but for now we add up equation (\ref{eq_deltas}) from $n=N_\delta$ to $N$ to obtain
    \begin{equation*}
        \sum\limits_{n=N_\delta}^N \left(\kep\right)\frac{1}{n^{\frac{1}{\ep}}\gengamma{n}^\frac{1}{\ep-1}} = \sum\limits_{n=N_\delta}^N\genmu\Delta\genbeta{n}-\sum\limits_{n=N_\delta}^N \sum_{i=2}^M\sum_{j=2}^{i}q_i(\Delta\genbeta{n}-\Delta\genbeta{n-j+1}).
    \end{equation*}
    Note that $\sum\limits_{n=N_\delta}^N\Delta\genbeta{n}=\genbeta{N}-\genbeta{N_\delta-1}$, making the above
    \begin{equation}
        \sum\limits_{n=N_\delta}^N \left(\kep\right)\frac{1}{n^{\frac{1}{\ep}}\gengamma{n}^\frac{1}{\ep-1}} =\genmu(\genbeta{N}-\genbeta{N_\delta-1}) - S, \label{eq_deltas_sum}
    \end{equation}
    where
    $$S=\sum_{i=2}^M\sum_{j=2}^{i}q_i\left[(\genbeta{N}-\genbeta{N_\delta-1})-(\genbeta{N-j+1}-\genbeta{N_\delta-j})\right].$$
    We have that $$\sum\limits_{n=N_\delta}^N\frac{1}{n^{\frac{1}{\ep}}}\geq \int_{N_\delta}^{N+1} x^{-1/\ep}dx \geq \int_{N_\delta}^{N} x^{-1/\ep}dx=\left(\frac{\ep}{\ep-1}\right)\left(N^\kep-N_\delta^\kep\right).$$
    Applying this to (\ref{eq_deltas_sum}) and using (\ref{eq_bound_delta}) we get
    \begin{equation*}
       \left(N^\kep-N_\delta^\kep\right) \left(\frac{1}{\limsup\limits_{n\rightarrow\infty}\gengamma{n}^{\frac{1}{\ep-1}}} - \delta\right) \leq \genmu(\genbeta{N}-\genbeta{N_\delta-1}) - S.
    \end{equation*}
    so dividing by $N^{\frac{\ep-1}{\ep}}$,
    \begin{equation}
       \left(1-\left(\frac{N_\delta}{N}\right)^\kep\right) \left(\frac{1}{\limsup\limits_{n\rightarrow\infty}\gengamma{n}^{\frac{1}{\ep-1}}} - \delta\right) \leq \genmu\left(\frac{\genbeta{N}}{N^\kep}-\frac{\genbeta{N_\delta-1}}{N^\kep}\right) - \frac{S}{N^\kep}. \label{eq_ineq_to_show_liminf}
    \end{equation}
    Now examine $S$,
    \begin{align}
        S & = \sum_{i=2}^M\sum_{j=2}^{i}q_i\left[(\genbeta{N}-\genbeta{N_\delta-1})-(\genbeta{N-j+1}-\genbeta{N_\delta-j})\right] \nonumber \\
        & = \sum_{i=2}^M\sum_{j=2}^{i}q_i\left[(\genbeta{N}-\genbeta{N-j+1}))-(\genbeta{N_\delta-1}-\genbeta{N_\delta-j})\right] \nonumber \\
        & = \sum_{i=2}^M\sum_{j=2}^{i}q_i\left[\sum\limits_{k=0}^{j}\Delta\genbeta{N-k}-(\genbeta{N_\delta-1}-\genbeta{N_\delta-j})\right]. \label{eq_S}
    \end{align}
    Recall that $\genbeta{n}$ is increasing in $n$, thus for any $n\geq M$,
    \begin{align}
        \Delta\genbeta{n} & =\genbeta{n}-\genbeta{n-1} \nonumber \\
        & = \genbeta{n}-\sum\limits_{i=1}^M q_i\genbeta{n-1} \nonumber\\
        & \leq \genbeta{n}-\sum\limits_{i=1}^M q_i\genbeta{n-i} \nonumber\\
        & =\left(\kep\right)\left(\frac{1}{\genbeta{n}^{1/(\ep-1)}}\right), \label{eq_beta_step_difference}
    \end{align}
    where the last equality follows from the equation for $\genbeta{n}$. Using this with equation (\ref{eq_S}), we get
    \begin{align*}
        \left|\frac{S}{N^\kep}\right| & \leq \sum_{i=2}^M\sum_{j=2}^{i}q_i\left[\sum\limits_{k=0}^{j}\left|\frac{\Delta\genbeta{N-k}}{N^\kep}\right|+\left|\frac{\genbeta{N_\delta-1}-\genbeta{N_\delta-j}}{N^\kep}\right|\right] \\
        & \leq \sum_{i=2}^M\sum_{j=2}^{i}q_i\left[\sum\limits_{k=0}^{j}\left(\kep\right)\left|\frac{1}{\genbeta{N-k}^{1/(\ep-1)}N^\kep}\right|+\left|\frac{\genbeta{N_\delta-1}-\genbeta{N_\delta-j}}{N^\kep}\right|\right],
    \end{align*}
    which limits to 0 as $N\rightarrow\infty$. Therefore
    $$\liminf\limits_{N\rightarrow\infty}\frac{S}{N^\kep}=\lim\limits_{N\rightarrow\infty}\frac{S}{N^\kep}=0.$$
    Now take $\liminf$ as $N\rightarrow\infty$ of inequality (\ref{eq_ineq_to_show_liminf}) to get
    \begin{align}
        \frac{1}{\limsup\limits_{n\rightarrow\infty}\gengamma{n}^{\frac{1}{\ep-1}}} - \delta & \leq \linf{N}\left[\genmu\left(\frac{\genbeta{N}}{N^\kep}-\frac{\genbeta{N_\delta-1}}{N^\kep}\right) - \frac{S}{N^\kep}\right] \nonumber \\
        & = \genmu\linf{N}\frac{\genbeta{N}}{N^\kep}-\genmu\lim\limits_{N\rightarrow\infty}\frac{\genbeta{N_\delta-1}}{N^\kep} - \lim\limits_{N\rightarrow\infty}\frac{S}{N^\kep} \nonumber \\
        & = \genmu\linf{N}\frac{\genbeta{N}}{N^\kep},
        \label{eq_liminf_ineq_with_s}
    \end{align}
    where splitting of the $\liminf$ is justified because the second and third terms limit to 0. Since (\ref{eq_liminf_ineq_with_s}) is true for all $\delta>0$, we have the relation
    \begin{equation}
        \frac{1}{\limsup\limits_{n\rightarrow\infty}\gengamma{n}^{\frac{1}{\ep-1}}}\leq \genmu\liminf\limits_{n\rightarrow\infty}\gengamma{n}. \label{eq_limsup_denom}
    \end{equation}
    
    (Of part 2) The proof of part 2 is developed in a nearly identical way to part 1, but in reverse. Note however that the analog of Equation (\ref{eq_bound_delta}) is
   \begin{equation*}
        \frac{1}{\gengamma{n}^\frac{1}{\ep-1}} \leq \frac{1}{\liminf\limits_{n\rightarrow\infty}\gengamma{n}^{\frac{1}{\ep-1}}} + \delta, \label{eq_bound_xi}
    \end{equation*}
    which requires the extra observation that the inequality is not trivially satisfied because the right-hand size is not $\infty$, due to Lemma \ref{lem_gamma_bounds} stating that $\linf{n}\gengamma{n}>0$.
\end{proof}

\end{document}